\documentclass[a4paper,11pt,leqno]{article}
\usepackage{amsfonts,amssymb,amsmath,amsthm}
\usepackage{enumerate}
\usepackage{a4wide}
\usepackage{color}
\usepackage{eucal}

\hfuzz=5pt\vfuzz=3pt

\newtheorem{theorem}{Theorem}[section]
\newtheorem{proposition}[theorem]{Proposition}
\newtheorem{corollary}[theorem]{Corollary}
\newtheorem{lemma}[theorem]{Lemma}
\theoremstyle{remark}
\theoremstyle{definition}
\newtheorem{remark}[theorem]{Remark}

\newtheorem{definition}[theorem]{Definition}

\numberwithin{equation}{section}

\newcommand{\eqdef}{\stackrel{{\rm{def}}}{=}}
\newcommand{\N}{{\mathbb N}}
\newcommand{\R}{{\mathbb R}}

\newcommand{\beq}{\begin{equation}}
\newcommand{\eeq}{\end{equation}}

\newcommand{\be}{\begin{equation}}
\newcommand{\ee}{\end{equation}}
\newcommand{\bea}{\begin{eqnarray}}
\newcommand{\eea}{\end{eqnarray}}

\parindent=0mm

\begin{document}
\title{Biharmonic equation  with Singular nonlinearity in $\R^N$
\thefootnote\relax\footnote{
All the authors of this work were supported by IFCAM.}
}

\author{
{ \sc Jacques Giacomoni and Guillaume Warnault} \\
{\sc\footnotesize LMAP (UMR CNRS 5142) Bat. IPRA,}\\
{\sc\footnotesize   Avenue de l'Universit\'e }\\
{\sc\footnotesize   F-64013 Pau, France}
\\
{\sc\footnotesize  e-mail: jacques.giacomoni@univ-pau.fr}\\
 \and 
 {\sc              S. Prashanth} \\
 {\sc\footnotesize TIFR-Centre For Applicable Mathematics}\\
 {\sc\footnotesize Post Bag No. 6503, Sharada Nagar,}\\
 {\sc \footnotesize GKVK Post Office,}\\
 {\sc\footnotesize  Bangalore 560065, India} \\
 {\sc\footnotesize e-mail: pras@math.tifrbng.res.in}\\
}

\date{\today}

\maketitle

\section{Introduction}
Let $\Omega$ be a bounded  smooth domain in $\R^N$, $N\geq 2$ and denote $\rho(x):=d(x,\partial\Omega),\  x \in \Omega$. Denote by $\phi_1$ and $\lambda_1$ respectively the first (positive) eigenfunction and the first eigenvalue of $-\Delta$ in the space $H^1_0(\Omega)$.  Also, let $G$ denote the positive Green's function for $-\Delta$ in $\Omega$. Assume that  $K\in C^\nu_{loc}(\Omega)$ ($\nu\in(0,1)$) is  such that
$$
 \inf_{\Omega} K >0, \;\;\text{ and }\;\; K =O(\rho^{-\beta}) \;\;\text{ near }\;\; \partial \Omega, \;\;\text{ for some }\;\; \beta \geq 0.
$$
Given $\alpha>0$, we consider the following fourth order singular elliptic  problem:
\begin{eqnarray*}
( P)\qquad \displaystyle\left\{\begin{array}
{ll}
 & \Delta^2 u
 =  K(x)u^{-\alpha}
  \quad \mbox{ in }\,\Omega , \\
&u> 0\quad \mbox{ in }\,\Omega, \;\;u\vert_{\partial\Omega}=0, \,\Delta u\vert_{\partial\Omega} = 0.
\end{array}\right.
 \end{eqnarray*} 

There is a large literature concerning such singular problems (as well as the corresponding systems) for  second order elliptic operators wherein   questions of existence, uniqueness and multiplicity, regularity, asymptotic behaviour, symmetry, etc. have been investigated (see for instance   \cite{AdGi}, \cite{BaGi}, \cite{CaGi}, \cite{CrRaTa}, \cite{DhGiPrSa}, \cite{Gh1}, \cite{Ha},  \cite{HeMaVe}, \cite{HiSaSh}). Similar results for the  quasilinear  case have been obtained in  \cite{GiScTa} and \cite{GiScTa1}. We refer  the reader to the two excellent surveys  \cite{GhRa} and \cite{HeMa} for more details. 

There are very few results available which concern fourth order singular problems similar to $(P)$. In \cite{Gh}, the author studies the problem $ \Delta^2 u = u^{-\alpha}$, $\alpha<1,$ but with  Dirichlet boundary condition. Furthermore, the author assumes that the domain is a perturbation of the ball to ensure  positivity of the associated Green's function. Using the Schauder  fixed point theorem to a suitable integral formulation of the problem  in an appropriate cone of positive continuous functions,  the existence and the uniqueness of a solution in $C^2(\Omega)\cap C^1_0(\overline{\Omega})$ that behaves like $\rho^2$ near the boundary is shown in this work. Since such a boundary  behaviour is expected, the restriction $\alpha<1$ is  necessary. 

In contrast with \cite{Gh}, we consider the problem $(P)$ for a general smooth bounded domain $\Omega$ with  Navier boundary conditions.  We first clarify the notion of a solution to $(P)$:
\begin{definition}\label{first-defi}
A function $u\in C^2(\overline{\Omega})$ is a solution to $(P)$ if 
$u>0$ in $\Omega$, $u=\Delta u=0$ on $\partial\Omega$ and satisfies the following integral identity for any $\psi\in C^2(\overline{\Omega})\cap C_0(\overline{\Omega})$:
\begin{equation}\label{varia-form}
\int_{\Omega}\Delta u\Delta\psi{\rm d}x=\int_{\Omega}K(x) u^{-\alpha}\psi{\rm d}x.
\end{equation}
\end{definition}
\begin{remark}\label{remark1}
\begin{enumerate}
\item We require $C^2(\overline{\Omega})$ regularity to be able to define $\Delta u=0$ on $\partial\Omega$.
\item A consequence of the above definition is that  a solution $u$ to $(P)$ necessarily satisfies 
$$\int_{\Omega} K(x) u^{-\alpha}(x)\rho(x){\rm d}x<\infty.$$ To see this, plug in  the test function $\psi=\phi_1$ in \eqref{varia-form}, where $\phi_1$ is the first (normalized) positive eigenfunction  of $-\Delta$  on  $H^1_0(\Omega)$.
\item Definition \ref{first-defi} is similar to the concept of very weak solution given in \cite{BrMaPo} (see definition 0.2 there) for solving second order elliptic problems with $L^1$- ( or measure) data. We adapt this notion here for fourth order elliptic equations.
\end{enumerate}
\end{remark}
The  solution to  $(P)$ can be defined equivalently  using the  Green's representation formula (see proposition \ref{defi-equiv} in section \ref{section3}).
It is easy to see that the equation in $(P)$ is equivalent to the following second order elliptic system:
\begin{eqnarray*}
( PS)\qquad \displaystyle\left\{\begin{array}
{ll}
 & -\Delta u
 = v 
  \quad \mbox{ in }\,\Omega , \, u> 0\quad \mbox{ in }\,\Omega,\\
&-\Delta v=K(x) u^{-\alpha} \quad \mbox{ in }\,\Omega ,\\
& \;\;u\vert_{\partial\Omega}=0, \,v\vert_{\partial\Omega} = 0.
\end{array}\right.
 \end{eqnarray*} 
Nevertheless,  $(PS)$ is not a cooperative system and hence monotone methods can not be used to prove existence of solutions to $(PS)$, as  is done in \cite{CrRaTa} for the single equation. Furthermore, for $\alpha\in (0,1)$, the problem $(P)$ has a variational structure and the energy functional $J$ associated to $(P)$ is defined as follows:  
\begin{eqnarray}\label{china}
\qquad J(w)\eqdef\frac{1}{2}\int_{\Omega}(\Delta w)^2 {\rm d}x- \frac{1}{1-\alpha}\int_{\Omega} K(x)w^{1-\alpha}{\rm d}x \;\;\text{ for }\;\; w\in X\eqdef H^2(\Omega)\cap H^1_0(\Omega).
\end{eqnarray}
Clearly, $J$ is well defined in the cone of nonnegative functions in $X$ provided $K$ has moderate singularity near $\partial\Omega$. But the main difficulty is that truncation  techniques (which work in case of second order elliptic equations) can not be used directly since we are in the $H^2$-framework. This makes it difficult to employ variational methods for studying $(P)$. Another difficulty is that the Schauder fixed point theorem (used in \cite{Gh}) works only in the case $\alpha<1$ where the invariance of the solution operator with respect to a cone of positive solutions can be ensured.

For these reasons,  our approach in this paper is slightly different. We first approximate the singular problem $(P)$ by a family of problems $(P_\epsilon)$ with regular terms  as given below and use apriori estimates to show the existence of solution.
We now state the results that we prove:

\begin{theorem}\label{main}
Assume that $\alpha+\beta<2$. Then there exists a unique solution $u$ to $(P)$. Furthermore, there exist $c_1, c_2>0$ such that
\begin{eqnarray}\label{behaviour-bound}
c_1 \rho(x)\leq u(x)\leq c_2 \rho(x).
\end{eqnarray}
\end{theorem}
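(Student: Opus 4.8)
The plan is to obtain $u$ as the limit of a family of regularized problems with bounded data and to read off \eqref{behaviour-bound} from a priori estimates that are \emph{uniform} in the regularization parameter. For $\eps\in(0,1)$, set $K_\eps:=\min\{K,1/\eps\}$ and consider
\begin{eqnarray*}
(P_\eps)\qquad \Delta^2u_\eps=K_\eps(x)\,(u_\eps+\eps)^{-\alpha}\quad\text{in }\Omega,\qquad u_\eps=\Delta u_\eps=0\quad\text{on }\partial\Omega,
\end{eqnarray*}
equivalently the system $-\Delta u_\eps=v_\eps$, $-\Delta v_\eps=K_\eps(x)(u_\eps+\eps)^{-\alpha}$, $u_\eps=v_\eps=0$ on $\partial\Omega$. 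Since the datum is now bounded, the solution operator $w\mapsto U$, where $U$ solves $\Delta^2U=K_\eps(w^{+}+\eps)^{-\alpha}$ with Navier conditions, is compact on $C(\overline\Omega)$ and maps a large ball into itself, so Schauder's theorem produces a solution $u_\eps$; the problem splitting into two Poisson problems with nonnegative data and homogeneous Dirichlet conditions, the maximum principle applied twice forces $u_\eps>0$ in $\Omega$, and elliptic regularity gives $u_\eps\in C^4_{loc}(\Omega)\cap C^2(\overline\Omega)$. Uniqueness of $u_\eps$ follows from the same test--function computation used for $(P)$ below.

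The crux is the \emph{uniform} two--sided bound, and the lower bound is where the first difficulty lies: bounding the right--hand side from above needs a lower bound on $u_\eps$ and conversely. I would break this circularity through an integral quantity. Taking $\psi=\phi_1$ in the weak formulation of $(P_\eps)$ and integrating by parts twice (all boundary terms vanish) gives $\lambda_1^2\int_\Omega u_\eps\phi_1\,{\rm d}x=\int_\Omega K_\eps(u_\eps+\eps)^{-\alpha}\phi_1\,{\rm d}x$; since $\inf_\Omega K>0$ and $t\mapsto t^{-\alpha}$ is convex, Jensen's inequality for the probability measure $\phi_1\,{\rm d}x/\!\int_\Omega\phi_1$ bounds the right--hand side below by $c\,\big(\int_\Omega u_\eps\phi_1+\eps\!\int_\Omega\phi_1\big)^{-\alpha}$, forcing $\int_\Omega u_\eps\phi_1\,{\rm d}x\ge m_0>0$ with $m_0$ independent of $\eps$. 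Using the well known global bound $G(x,y)\ge c_0\,\rho(x)\rho(y)$ and $\rho\asymp\phi_1$ in the representations $v_\eps(x)=\int_\Omega G(x,y)K_\eps(y)(u_\eps(y)+\eps)^{-\alpha}\,{\rm d}y$ and $u_\eps(x)=\int_\Omega G(x,y)v_\eps(y)\,{\rm d}y$ then propagates this to $v_\eps\ge\tilde c_1\rho$ and finally $u_\eps\ge c_1\rho$ in $\Omega$, with $c_1>0$ independent of $\eps$.

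With the lower bound in hand, $0\le K_\eps(u_\eps+\eps)^{-\alpha}\le K\,u_\eps^{-\alpha}\le C\rho^{-(\alpha+\beta)}$ in $\Omega$. Here the hypothesis $\alpha+\beta<2$ enters through the classical fact that for $\gamma<2$ the solution of $-\Delta w=\rho^{-\gamma}$ in $\Omega$, $w|_{\partial\Omega}=0$, is bounded — indeed $0\le w\le C\rho^{\min\{1,2-\gamma\}}$, with a logarithmic correction when $\gamma=1$ — equivalently $\sup_x\int_\Omega G(x,y)\rho(y)^{-\gamma}\,{\rm d}y<\infty$. Applying this with $\gamma=\alpha+\beta$ gives $0\le v_\eps\le C$ in $\Omega$ uniformly in $\eps$, whence $-\Delta u_\eps=v_\eps\le C$ yields $u_\eps\le c_2\rho$ uniformly in $\eps$. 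This is the main obstacle, and the use of $\alpha+\beta<2$ here is sharp: it is exactly the integrability condition $\int_\Omega K u^{-\alpha}\rho\,{\rm d}x<\infty$ required of any solution for the expected boundary behaviour (Remark \ref{remark1}(2)).

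It remains to pass to the limit and prove uniqueness. By the uniform bounds $c_1\rho\le u_\eps\le c_2\rho$, the data $K_\eps(u_\eps+\eps)^{-\alpha}$ are bounded in $C^\nu$ on compact subsets of $\Omega$, so interior Schauder estimates give a subsequence $u_{\eps_n}\to u$ in $C^4_{loc}(\Omega)$ with $\Delta^2u=K u^{-\alpha}$ in $\Omega$ and $c_1\rho\le u\le c_2\rho$; dominated convergence (majorant $C\rho^{-(\alpha+\beta)}$ for the data, $G$--integrable since $\alpha+\beta<2$, and $C\rho^{\min\{1,2-\alpha-\beta\}}$ for $v_\eps=-\Delta u_\eps$) gives $u\in C^2(\overline\Omega)$ with $u=\Delta u=0$ on $\partial\Omega$, and passing to the limit in $\int_\Omega\Delta u_\eps\Delta\psi\,{\rm d}x=\int_\Omega K_\eps(u_\eps+\eps)^{-\alpha}\psi\,{\rm d}x$ (dominated convergence again, $|\psi|\le C\rho$) shows $u$ solves $(P)$ in the sense of Definition \ref{first-defi}. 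For uniqueness, observe first that the two estimates above, run verbatim from \eqref{varia-form} with $\psi=\phi_1$ and the Green representation, show that \emph{every} solution $\tilde u$ of $(P)$ satisfies $c\rho\le\tilde u\le C\rho$. Then, given two solutions $u_1,u_2$, the difference $u_1-u_2\in C^2(\overline\Omega)\cap C_0(\overline\Omega)$ is admissible in \eqref{varia-form}, and subtracting the corresponding identities yields
\begin{equation*}
\int_\Omega\big(\Delta(u_1-u_2)\big)^2\,{\rm d}x=\int_\Omega K\big(u_1^{-\alpha}-u_2^{-\alpha}\big)(u_1-u_2)\,{\rm d}x\le 0,
\end{equation*}
the right--hand side being finite (bounded by $C\int_\Omega\rho^{1-\alpha-\beta}\,{\rm d}x<\infty$) and nonpositive since $t\mapsto t^{-\alpha}$ decreases; hence $\Delta(u_1-u_2)\equiv0$ and, $u_1-u_2$ vanishing on $\partial\Omega$, $u_1=u_2$. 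Finally, uniqueness of the limit upgrades the subsequential convergence to convergence of the whole family $\{u_\eps\}_\eps$.
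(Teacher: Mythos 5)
Your overall strategy coincides with the paper's: regularize to $(P_\epsilon)$, get a uniform pointwise lower bound, a uniform upper bound, pass to the limit, and prove uniqueness by testing with $u_1-u_2$. Two of your sub-arguments are pleasant variants of the paper's. For the lower bound you use Jensen's inequality (convexity of $t \mapsto t^{-\alpha}$ against the probability measure $\phi_1\,{\rm d}x / \int\phi_1$) to force $\int u_\epsilon\phi_1 \geq m_0 > 0$; the paper instead argues by contradiction (Proposition \ref{jyo}). You then propagate this to $u_\epsilon\geq c_1\rho$ via the Green's-function bound $G(x,y)\geq c_0\rho(x)\rho(y)$; the paper packages the same content as a Hopf-type principle (Proposition \ref{prop-Hopf}) via the Brezis--Cabr\'e lemma. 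Both routes are correct.

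There is, however, a genuine gap in the regularity passage. You assert that dominated convergence in the Green representation ``gives $u\in C^2(\overline{\Omega})$.'' What dominated convergence actually gives is $u_\epsilon\to u$ and $v_\epsilon=-\Delta u_\epsilon\to v$ uniformly on $\overline{\Omega}$, hence $u,v\in C_0(\overline{\Omega})$ and, from the interior Schauder estimates, $u\in C^4(\Omega)$ with $v=-\Delta u$ in $\Omega$. This is weaker than $u\in C^2(\overline{\Omega})$: knowing $-\Delta u\in C(\overline{\Omega})$ does \emph{not} imply that all second derivatives of $u$ extend continuously to $\partial\Omega$ (the failure of $C^0$-Schauder). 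To close this one needs $v\in C^{0,\theta}(\overline{\Omega})$ for some $\theta>0$, and then classical Schauder gives $u\in C^{2,\theta}(\overline{\Omega})$. The two facts you already have --- $0\leq v_\epsilon\leq C\rho^{\mu}$ with $\mu>0$ and $|\Delta v_\epsilon|\leq C\rho^{-(\alpha+\beta)}$ with $\alpha+\beta<2$ --- are precisely the hypotheses of the boundary H\"older estimate of Gui--Lin (Proposition 3.4 in \cite{GuLi}) that the paper invokes in Lemma \ref{jee} to obtain a \emph{uniform} bound $\Vert u_\epsilon\Vert_{C^{2,\theta}(\overline{\Omega})}\leq C$ (Proposition \ref{chr}), after which Arzel\`a--Ascoli yields $u\in C^{2,\theta}(\overline{\Omega})$. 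Without this (or some replacement such as a weighted $W^{2,p}$-estimate and Morrey), your limit $u$ is only known to lie in $C^4(\Omega)\cap C_0(\overline{\Omega})$ with $\Delta u\in C_0(\overline{\Omega})$, which does not meet Definition \ref{first-defi}. Inserting the Gui--Lin estimate after your barrier bound on $v_\epsilon$ repairs the argument and makes it essentially identical in substance to the paper's proof.
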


The idea behind the proof (see section \ref{section2}) is  to approximate the  problem in the following way:
\begin{eqnarray*}
( P_\epsilon)\qquad \displaystyle\left\{\begin{array}
{ll}
 & \Delta^2 u
 =  K_\epsilon(x) (u+\epsilon)^{-\alpha}
  \quad \mbox{ in }\,\Omega , \;\;  K_\epsilon:=\min(\frac{1}{\epsilon}, K),\\
&u> 0\quad \mbox{ in }\,\Omega, \;\;u\vert_{\partial\Omega}=0,\, \Delta u\vert_{\partial\Omega} = 0.
\end{array}\right.
 \end{eqnarray*} 
 The existence of the solution $u_\epsilon$  to $(P_\epsilon)$ can be obtained by the Schauder fix point theorem. We then prove a priori estimates on $\{u_\epsilon\}_{\epsilon>0}$ using crucially the restrictions on $\alpha,\beta$ and pass to the limit as $\epsilon\to 0^+$. 
 

The following nonexistence result proves that the restriction $\alpha+\beta<2$ is sharp in the above results:
\begin{theorem}\label{nonexistence}
Assume that $\alpha+\beta\geq 2$. Then, there is no solution to $(P)$.
\end{theorem}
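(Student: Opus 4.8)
The plan is to argue by contradiction, combining the boundary regularity that Definition~\ref{first-defi} forces on a solution with the integrability constraint recorded in Remark~\ref{remark1}(2). Suppose $u$ solves $(P)$. The first step is the unconditional bound $u=O(\rho)$ near $\partial\Omega$: since $u\in C^2(\overline\Omega)\subset C^1(\overline\Omega)$ vanishes on $\partial\Omega$, a standard consequence of this boundary regularity (integrate $\nabla u$ along the segment from $x\in\Omega$ to a nearest boundary point) is that there is $C>0$ with $0<u(x)\le C\rho(x)$ throughout $\Omega$, whence
\begin{equation*}
u^{-\alpha}(x)\ \ge\ c_0\,\rho(x)^{-\alpha}\ \text{ for }x\in\Omega,\qquad c_0:=C^{-\alpha}>0.
\end{equation*}
This step uses only the regularity built into the notion of a solution, nothing about the sign of $\alpha+\beta-2$.

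The second step invokes Remark~\ref{remark1}(2): every solution satisfies $\int_\Omega K(x)\,u^{-\alpha}(x)\,\rho(x)\,{\rm d}x<\infty$. Fix a boundary collar $\Omega_\delta:=\{x\in\Omega:\rho(x)<\delta\}$ with $\delta$ small enough that $\rho\in C^2(\overline{\Omega_\delta})$, and use $K\ge c\,\rho^{-\beta}$ there together with the first step to obtain
\begin{equation*}
\infty\ >\ \int_\Omega K\,u^{-\alpha}\rho\,{\rm d}x\ \ge\ c\,c_0\int_{\Omega_\delta}\rho(x)^{\,1-\alpha-\beta}\,{\rm d}x .
\end{equation*}
By the coarea formula, with $|\nabla\rho|\equiv1$ on $\Omega_\delta$ and $\mathcal H^{N-1}(\{\rho=t\})\asymp\mathcal H^{N-1}(\partial\Omega)$ as $t\to0^+$, the last integral is comparable to $\int_0^\delta t^{\,1-\alpha-\beta}\,{\rm d}t$, which is infinite precisely when $1-\alpha-\beta\le-1$, i.e.\ when $\alpha+\beta\ge2$. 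This contradiction shows $(P)$ has no solution, and together with Theorem~\ref{main} it identifies $\alpha+\beta<2$ as the sharp threshold.

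The delicate point — and the only place where a two-sided control on the growth of $K$ is used — is the pointwise lower bound $K\gtrsim\rho^{-\beta}$ near $\partial\Omega$ in the second step; the standing hypotheses supply the matching upper bound $K=O(\rho^{-\beta})$ and $\inf_\Omega K>0$, so the obstruction is effective exactly when $K$ is genuinely of order $\rho^{-\beta}$ at the boundary (for instance $K(x)=\rho(x)^{-\beta}$, which satisfies all the assumptions), while for $\beta=0$ the computation reduces to the necessity of $\alpha<2$, which follows from $\inf_\Omega K>0$ alone. Beyond this I expect no real obstacle: once the $O(\rho)$ bound and the $\phi_1$-test identity of Remark~\ref{remark1}(2) are in hand, the conclusion is a one-line boundary integral. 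As an alternative one could replace Remark~\ref{remark1}(2) by the Green representation of Proposition~\ref{defi-equiv}, writing $u(x)=\int_\Omega\mathcal G(x,y)K(y)u^{-\alpha}(y)\,{\rm d}y$ and using $\mathcal G(x,y)\gtrsim\rho(x)\rho(y)$ for the Navier bi-Laplacian Green function $\mathcal G$ to again force $\int_\Omega\rho\,K\,u^{-\alpha}\,{\rm d}x<\infty$, after which the divergence argument is unchanged.
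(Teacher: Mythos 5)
Your argument is correct and is a more elementary, self-contained variant of the paper's proof. The paper also obtains $u\le C\rho$ from $C^1(\overline{\Omega})$ regularity, but in addition derives the matching lower bound $u\ge c\rho$ from the Green representation (inequality \eqref{phi1-control}) so as to conclude $u\sim\rho$, and then closes the argument by invoking Theorem~2.4 of \cite{Gh1}. You need only the upper bound on $u$; combining it with the integrability constraint $\int_\Omega K u^{-\alpha}\rho\,{\rm d}x<\infty$ of Remark~\ref{remark1}(2) (the test-function identity with $\psi=\phi_1$), you compute directly that this integral diverges over a boundary collar once $\alpha+\beta\ge 2$. The gain is that the external citation is replaced by a short coarea computation and the lower bound on $u$ is not needed at all. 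You are also right to flag that the step $K\gtrsim\rho^{-\beta}$ near $\partial\Omega$ is not literally guaranteed by the standing hypotheses, which supply only $\inf_\Omega K>0$ and $K=O(\rho^{-\beta})$; the paper's appeal to \cite{Gh1} carries the same implicit dependence on a matching lower bound for the weight, so this is an imprecision in the statement of Theorem~\ref{nonexistence} as written (the parameter $\beta$ must be read as the exact boundary growth rate of $K$) rather than a gap specific to your argument. For $\beta=0$ your computation closes using only $\inf_\Omega K>0$.
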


Next, we use Theorem \ref{main} to obtain the existence of a path-connected branch of solutions to the following bifurcation problem: 
\begin{eqnarray}\label{bifurcation-problem}
( P_\lambda)\qquad \displaystyle\left\{\begin{array}
{ll}
 & \Delta^2 u
 =  K(x) u^{-\alpha}+\lambda f(u)
  \quad \mbox{ in }\,\Omega ,\\
&u> 0\quad \mbox{ in }\,\Omega, \;\; u\vert_{\partial\Omega}=0, \,\Delta u\vert_{\partial\Omega} = 0
\end{array} \right.
\end{eqnarray} 
where $\lambda$ is the  bifurcation parameter and $f$ a  function  satisfying the following assumptions:
\begin{itemize}
\item[($f_0$)] $f:[0,\infty) \to [0,\infty)$ is a twice continuously differentiable map with $f(0)=0$. 
\item[($f_1$)]  $f(t)$  is a finite product of functions of the form  $g(t^{p}), p > 0,$ where $g$  is a  real entire function on $\R$. 
\item[($f_2$)]  $f' \geq 0$ and \; $\displaystyle \liminf_{t \to \infty} \frac{f(t)}{t} >0.$ 
\end{itemize}

Given a positive continuous function $\phi$ on $\Omega$, denote by 
$$\mathcal{C}_{\phi}(\Omega):= \Big \{ u\in C(\overline{\Omega}): \displaystyle \sup_{\Omega} \Big|\frac{u}{\phi} \Big| < +\infty \Big \},$$
with the norm
$$
\| u\|_{\mathcal{C}_{\phi}(\Omega)}:=  \displaystyle \sup_{\Omega} \Big|\frac{u}{\phi} \Big| 
$$
and the ``positive cone"
$$\mathcal{C}_{\phi}^+(\Omega):= \Big \{ u\in \mathcal{C}_{\phi}(\Omega): \displaystyle \inf_{\Omega} \frac{u}{\phi} >0 \Big \}.$$  
We define the inverse of the biharmonic operator denoted as $(\Delta^2)^{-1}$ as follows:
$$
(\Delta^2)^{-1}h = u 
$$
where for $h$ in an appropriate space, $u$ solves the inhomogeneous problem:
\begin{eqnarray}\label{biharmonic-f}
\displaystyle\left\{\begin{array}
{ll}
 & \Delta^2 u
 =  h
  \quad \mbox{ in }\,\Omega ,\\
&u\vert_{\partial\Omega}= \Delta u\vert_{\partial\Omega} = 0.
\end{array}\right.
\end{eqnarray}

The bifurcation analysis is done in the space $\R \times \mathcal{C}_{\phi_1}(\Omega)$. Therefore, we consider the following set of all  solutions (in the sense of definition \ref{first-defi})
\begin{equation}\label{soln set}
\mathcal{S}= \{u\in C^4(\Omega)\cap C^2(\overline{\Omega}), u>0 \text{ solves } (P_\lambda)\} \subset \R \times \mathcal{C}_{\phi_1}(\Omega).
\end{equation}
Consider the following solution operator associated to $(P_\lambda)$ : 
\begin{equation}
F(\lambda ,u)=u- (\Delta^2 )^{-1}(K(x)u^{-\alpha }+\lambda f(u)),\;  
(\lambda ,u)\in \mathbb{R} \times {\mathcal C}_{\phi_1 }^+(\Omega ), \;0<\alpha+\beta<2.  \label{3}
\end{equation}
 Using the framework of analytic bifurcation theory as developed in the works \cite{BuDaTo} and \cite{BuDaTo2} (see also  \cite{BoGiPr} and \cite{BuTo}), we obtain an analytic global unbounded path of solutions to $(P_\lambda)$:
\begin{theorem}\label{th1}
Let $f$ satisfy conditions $(f_0)-(f_2)$ and $\alpha+\beta<2$.
	Then, $F : \mathbb{R} \times {\mathcal C}_{\phi_1 }^+(\Omega ) \to {\mathcal C}_{\phi_1 }(\Omega )$ is an analytic map (see definition \ref{anal map}). Furthermore, there exists $\Lambda \in(0,\infty)$ and an unbounded set  ${\mathcal A} \subset (-\infty,\Lambda]\times {\mathcal C}^+_{\phi_1}(\Omega) \subset \mathcal{S}$ of solutions to $(P_\lambda)$ which is globally parametrised by a continuous  map :
	
	 $$(-\infty,\infty) \ni s \to (\lambda(s),u(s)) \in \mathcal{A} .$$
 
 Moreover, the following properties hold along this  path $\mathcal{A}$:    
\begin{itemize}
\item[(i)] $(\lambda(s),u(s))\to (0,u_0)$  in  $\R \times {\mathcal C}_{\phi_1}(\Omega)$ as $s \to 0$, where $u_0$ is the unique solution to $(P)$.
\item[(ii)] $\Vert u(s)\Vert_{{\mathcal C}_{\phi_1}(\Omega)}\to \infty$ as $s\to\infty$ . 
\item[(iii)] $\mathcal{A}$ has at least one asymptotic bifurcation point $\Lambda_a\in [0,\Lambda]$. That is, there exist sequences $\{s_n\}_{n\in\N}\subset (0,\infty)$, $\{(\lambda(s_n), u(s_n))\} \subset {\mathcal A}$ such that $s_n\to\infty$, $\lambda(s_n)\to\Lambda_a$ and $\Vert u(s_n)\Vert_{{\mathcal C}_{\phi_1}(\Omega)} \to \infty$.
\item[(iv)]   $\left\{s\geq 0\,:\,\partial_u F(\lambda(s),u(s)) \text{ is not invertible }\right\}$ is a discrete set.
\item[(v)]($\mathcal{A}$ is an ``analytic" path) At each of its points ${\mathcal A}$ has a local analytic re-parameterization in the following sense: For each $s^*\in \R$ there exists a continuous, injective map $\rho^*\,:\, (-1,1)\to \R$ such that $\rho^*(0)=s^*$  and the re-parametrisation
\begin{eqnarray*}
 (-1,1) \ni t\to (\lambda(\rho^*(t)),u(\rho^*(t))) \in \mathcal{A} \mbox{ is analytic}.
\end{eqnarray*}
Furthermore, the map $s \mapsto \lambda(s)$ is injective in a neighborhood of $s=0$ and for each $s^* >0$ there exists $\epsilon^*>0$ such that $\lambda$ is injective on $[s^*,s^*+\epsilon^*]$ and on $[s^*-\epsilon^*,s^*]$.

\item[(vi)] For any $\lambda\leq 0$, there exists atmost one solution to $(P_\lambda)$ and ${\mathcal A} \cap (-\infty,0) \times {\mathcal C}_{\phi_1}(\Omega )$ is a single analytic curve which is a graph from the $\lambda$ axis consisting of non-degenerate solutions $u_\lambda$. In particular, we can take $\lambda(s)=s$ for $s<0$.
 \end{itemize}
\end{theorem}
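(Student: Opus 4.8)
The plan is to realise $(P_\lambda)$ as the equation $F(\lambda,u)=0$ for $F$ as in \eqref{3}, to verify that $F$ fits the framework of analytic global bifurcation theory \cite{BuDaTo,BuDaTo2} (see also \cite{BoGiPr,BuTo}), to start the continuation at the nondegenerate solution $(0,u_0)$ of Theorem \ref{main}, and finally to pin down the correct branch of the abstract global alternative by a priori estimates and nonsolvability of $(P_\lambda)$ for large $\lambda$. \emph{Step 1 (analyticity of $F$ and the Fredholm property).} On $\mathcal{C}_{\phi_1}^+(\Omega)$ one has $c\rho\le c'\phi_1\le u$, so $Ku^{-\alpha}=O(\rho^{-(\alpha+\beta)})$; thus $u\mapsto Ku^{-\alpha}$ sends $\mathcal{C}_{\phi_1}^+(\Omega)$ into $\mathcal{C}_{\rho^{-(\alpha+\beta)}}(\Omega)$ and $u\mapsto f(u)$ into $C(\overline\Omega)\subset\mathcal{C}_{\rho^{-(\alpha+\beta)}}(\Omega)$. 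Since $\alpha+\beta<2$, inverting the system $(PS)$ twice with the Green's function $G$ of $-\Delta$ and the weighted estimate $\int_\Omega G(x,y)\rho(y)^{-(\alpha+\beta)}\,dy\le C\rho(x)$ yields that $(\Delta^2)^{-1}\colon\mathcal{C}_{\rho^{-(\alpha+\beta)}}(\Omega)\to\mathcal{C}_{\phi_1}(\Omega)$ is bounded linear and compact (there is a gain of regularity up to the boundary). Analyticity of $u\mapsto Ku^{-\alpha}$ follows from the binomial series $u^{-\alpha}=u_*^{-\alpha}\sum_{k\ge0}\binom{-\alpha}{k}\big((u-u_*)/u_*\big)^k$, which converges in $\mathcal{C}_{\rho^{-(\alpha+\beta)}}(\Omega)$ once $\|(u-u_*)/u_*\|_\infty<1$, i.e. for $u$ near $u_*$ in $\mathcal{C}_{\phi_1}(\Omega)$ --- here it is essential that $u_*\ge\delta\phi_1$ for some $\delta>0$. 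Analyticity of $u\mapsto f(u)$ into $C(\overline\Omega)$ uses $(f_1)$: $u\mapsto u^p$ is analytic into $C(\overline\Omega)$ by the same argument, composition with an entire $g$ is analytic on bounded subsets of $C(\overline\Omega)$, and $C(\overline\Omega)$ is a Banach algebra so finite products stay analytic. Composing with $(\Delta^2)^{-1}$, $F\colon\R\times\mathcal{C}_{\phi_1}^+(\Omega)\to\mathcal{C}_{\phi_1}(\Omega)$ is analytic in the sense of Definition \ref{anal map}. Moreover $\partial_u F(\lambda,u)\varphi=\varphi-(\Delta^2)^{-1}\big((-\alpha Ku^{-\alpha-1}+\lambda f'(u))\varphi\big)$; the potential is $O(\rho^{-(\alpha+\beta+1)})$ and $\varphi=O(\rho)$ in $\mathcal{C}_{\phi_1}(\Omega)$, so the product lies in $\mathcal{C}_{\rho^{-(\alpha+\beta)}}(\Omega)$ and $\partial_u F(\lambda,u)=I-(\text{compact})$ is Fredholm of index zero at every solution. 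Elliptic bootstrap through $(PS)$ (again using $\alpha+\beta<2$) shows that every zero of $F$ in $\R\times\mathcal{C}_{\phi_1}^+(\Omega)$ belongs to $\mathcal{S}$.

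\emph{Step 2 (the starting point, and property (vi)).} By Theorem \ref{main}, $(0,u_0)\in\mathcal{S}$ with $u_0\in\mathcal{C}_{\phi_1}^+(\Omega)$, and $\partial_u F(0,u_0)$ has trivial kernel: a kernel element solves $\Delta^2\varphi+\alpha Ku_0^{-\alpha-1}\varphi=0$ with Navier conditions, and pairing with $\varphi$ gives $\int_\Omega(\Delta\varphi)^2+\alpha\int_\Omega Ku_0^{-\alpha-1}\varphi^2=0$ (the last integral finite since $Ku_0^{-\alpha-1}\varphi^2=O(\rho^{\,1-\alpha-\beta})$), whence $\varphi\equiv0$ by positive definiteness of $\Delta^2$ under Navier conditions; by the Fredholm alternative $\partial_u F(0,u_0)$ is invertible, $(0,u_0)$ is nondegenerate, and the analytic implicit function theorem starts the branch. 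For $\lambda\le0$, the nonlinearity $t\mapsto K(x)t^{-\alpha}+\lambda f(t)$ is strictly decreasing in $t$ (because $f'\ge0$), which yields: uniqueness by comparison (the resolvent of $\Delta^2$ with Navier conditions is positivity preserving, being the composition of two positivity preserving resolvents of $-\Delta$); existence for every $\lambda\le0$ by monotone iteration between a subsolution in $\mathcal{C}_{\phi_1}^+(\Omega)$ and the supersolution $u_0$ (since $\lambda f(u_0)\le0$); monotonicity $\lambda\mapsto u_\lambda$ with $u_\lambda\ge u_{-R}\ge c_R\phi_1$ for $\lambda\in[-R,0]$; and nondegeneracy of each $u_\lambda$ (the linearised potential $-\alpha Ku_\lambda^{-\alpha-1}+\lambda f'(u_\lambda)\le0$, so the linearisation is invertible). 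By uniqueness the local analytic pieces patch into one analytic graph $\{(\lambda,u_\lambda)\colon\lambda\le0\}$ of nondegenerate solutions through $(0,u_0)$ --- this is (vi), and we take $\lambda(s)=s$, $u(s)=u_s$ for $s\le0$.

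\emph{Step 3 (global continuation, and (i)--(v)).} With Steps 1--2, the analytic global implicit function theorem of \cite{BuDaTo,BuDaTo2} produces a maximal continuous path $(-\infty,\infty)\ni s\mapsto(\lambda(s),u(s))\in\mathcal{S}$ through $(0,u_0)$, enjoying the local analytic reparametrisation (v) and the discreteness (iv) of $\{s\ge0\colon\partial_u F(\lambda(s),u(s))\text{ not invertible}\}$ (a whole analytic sub-arc of degenerate solutions being excluded by the Lyapunov--Schmidt construction of the path), and satisfying: either the path is unbounded in $\R\times\mathcal{C}_{\phi_1}(\Omega)$, or it is a closed loop, or it reaches the boundary of $\R\times\mathcal{C}_{\phi_1}^+(\Omega)$. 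A closed loop is impossible because, by Step 2, the path already contains the unbounded graph $\{(\lambda,u_\lambda)\colon\lambda\le0\}$, along which $\lambda(s)\to-\infty$ as $s\to-\infty$. Escape through the boundary of the cone with $\lambda$ bounded is also impossible: on a piece of the path with $\lambda(s)$ in a compact set and $\|u(s)\|_{\mathcal{C}_{\phi_1}(\Omega)}\le C$, one gets for $\lambda(s)\ge0$ that $u(s)=(\Delta^2)^{-1}\big(Ku(s)^{-\alpha}+\lambda(s)f(u(s))\big)\ge(\Delta^2)^{-1}\big((\inf_\Omega K)(C\phi_1)^{-\alpha}\big)\ge c\rho\ge c'\phi_1$, and for $\lambda(s)\in[-R,0]$ the bound $u(s)\ge u_{-R}\ge c_R\phi_1$ of Step 2; so the path stays in the interior of $\mathcal{C}_{\phi_1}^+(\Omega)$. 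Since by Step 4 below $\lambda(s)\le\Lambda<\infty$ along the path, and $\lambda(s)\to-\infty$ can occur only on the graph part (uniqueness for $\lambda<0$ forbidding a second piece over $\{\lambda<0\}$ for $s$ large), unboundedness forces $\|u(s)\|_{\mathcal{C}_{\phi_1}(\Omega)}\to\infty$ as $s\to+\infty$, which is (ii). Property (i) follows from continuity of the path at $s=0$ and uniqueness in Theorem \ref{main}; a subsequence $s_n\to\infty$ with $\lambda(s_n)\to\Lambda_a\in[0,\Lambda]$ (possible since $\{\lambda(s)\colon s\ge0\}$ is bounded) gives the asymptotic bifurcation point (iii). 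The injectivity in (v) holds near $s=0$ because the branch is there a graph over $\lambda$, and near $s^*>0$ because $\lambda$ is analytic and nonconstant, hence locally a power of $s-s^*$, so injective on each one-sided interval at $s^*$.

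\emph{Step 4 (finiteness of $\Lambda$), and the main obstacle.} Set $\Lambda:=\sup\{\lambda\colon(P_\lambda)\text{ is solvable}\}$. If $u$ solves $(P_\lambda)$, then $\Delta^2 u\ge(\inf_\Omega K)u^{-\alpha}+\lambda f(u)\ge c_\lambda\,u$ pointwise, where $c_\lambda:=\inf_{t>0}\big[(\inf_\Omega K)t^{-\alpha-1}+\lambda f(t)/t\big]$; because $t^{-\alpha-1}\to\infty$ as $t\to0^+$, while $(f_1)$--$(f_2)$ force $f(t)/t$ to have a positive lower bound near infinity and a power lower bound near $0$, optimising over the crossover of the two terms shows $c_\lambda\to\infty$ as $\lambda\to\infty$. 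Pairing $\Delta^2 u\ge c_\lambda u$ with $\phi_1$ and integrating by parts twice (Navier conditions on both $u$ and $\phi_1$) gives $\lambda_1^2\int_\Omega u\phi_1\ge c_\lambda\int_\Omega u\phi_1$, i.e. $c_\lambda\le\lambda_1^2$; hence $(P_\lambda)$ has no solution once $c_\lambda>\lambda_1^2$, and $\Lambda<\infty$ --- this is precisely where $(f_2)$ (and the singular term) is indispensable. The technical heart of the proof is Step 1 --- the analyticity of $F$ and the compactness of $\partial_u F$ in the weighted spaces $\mathcal{C}_{\phi_1}(\Omega)$ and $\mathcal{C}_{\rho^{-(\alpha+\beta)}}(\Omega)$, which is where $\alpha+\beta<2$ and the structure hypothesis $(f_1)$ enter --- together with the boundary a priori estimate of Step 3 that confines the path to the open cone $\mathcal{C}_{\phi_1}^+(\Omega)$; once these are in hand, the global picture is delivered by the abstract analytic bifurcation machinery.
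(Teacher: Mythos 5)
Your overall architecture coincides with the paper's: identify $(P_\lambda)$ with $F(\lambda,u)=0$, verify analyticity and the Fredholm property of $\partial_u F$ in the weighted spaces ${\mathcal C}_{\phi_1}$ and ${\mathcal C}_{\phi_1^{-\alpha-\beta}}$, start the branch at the non-degenerate solution $(0,u_0)$, invoke the analytic global continuation theorem, and rule out the boundary-escape and closed-loop alternatives via the Hopf/Green estimate and the a priori bound $\Lambda<\infty$. Steps~1 and~4 are essentially the paper's Propositions~\ref{analyticity} and~\ref{nar} (your $\inf_{t>0}$-optimisation in Step~4 is a cleaner variant of the paper's inequality, and both deliver the same conclusion).

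However, Step~2 contains a genuine gap. You claim existence of $u_\lambda$ for every $\lambda\le 0$ by \emph{monotone iteration between a subsolution in ${\mathcal C}_{\phi_1}^+$ and the supersolution $u_0$}, and you later use the resulting monotone unbounded graph $\{(\lambda,u_\lambda):\lambda\le 0\}$ both to rule out alternative (e)(iii) (closed loop) and to push the path away from $\partial{\mathcal U}$ for $\lambda<0$. But the monotone-iteration and sub/supersolution comparison arguments do not transfer to this problem: the equivalent system $(PS)$ is \emph{not cooperative} (the map $u\mapsto K(x)u^{-\alpha}+\lambda f(u)$ is order-reversing for $\lambda\le 0$, so the solution operator $T$ is order-reversing, not order-preserving), and the standard truncation device used to prove weak comparison is unavailable in the $H^2\cap H^1_0$ framework — both of these obstructions are explicitly highlighted in the paper's introduction as the reason monotone methods fail here. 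As written, your claimed comparison principle, the monotonicity $\lambda\mapsto u_\lambda$, and the existence for all $\lambda\le 0$ are therefore unjustified. The correct route, which the paper takes, is the reverse logical order: the global continuation theorem \emph{produces} the path, and the ruling out of the closed loop uses only what is available near $s=0$ (uniqueness of the solution $u_0$ to $(P_0)$ together with non-degeneracy of $(0,u_0)$, so that ${\mathcal A}_0$ is the unique local branch and ${\mathcal A}$ cannot return to $(0,u_0)$); the graph description of ${\mathcal A}$ over $\lambda<0$ in (vi) is then a \emph{consequence} of non-degeneracy along the path, not an input established beforehand by sub/supersolutions. If you wish to keep your argument you must either replace monotone iteration by a Schauder-type fixed point argument with uniform a priori bounds as in the proof of Theorem~\ref{main} (valid for $\lambda\le 0$ because $\lambda f(u)\le 0$ only helps the estimates), or simply drop the independent existence claim for $\lambda<0$ and rule out (e)(iii) via local uniqueness at $(0,u_0)$, as the paper does.
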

The paper is organized as follows: In Section \ref{section2}, we prove Theorem \ref{main} using a version of  Hopf principle recalled in proposition \ref{prop-Hopf}. In Section \ref{section3}, we study the equivalence between the two definitions of a solution and prove Theorem \ref{nonexistence}. Finally, in Section \ref{section4} we prove Theorem \ref{th1}.
\section{Some preliminary results for Theorem \ref{main}}\label{prel}
We first prove a version of Hopf principle.
\begin{proposition}\label{prop-Hopf}
Let $h\in L^\infty(\Omega)$ be a nonnegative function. Let $u$ be the classical solution  to \eqref{biharmonic-f}.
Then there exists a constant $C>0$ (independent of $h$) such that the following inequality holds:
\begin{equation}\label{1}
u(x)\geq C \rho(x)\int_{\Omega}h(y)\rho(y){\rm d}y.
\end{equation}
\end{proposition}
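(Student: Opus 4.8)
The plan is to split the biharmonic problem \eqref{biharmonic-f} into the two second-order problems $-\Delta v = h$ in $\Omega$, $v|_{\partial\Omega}=0$, and $-\Delta u = v$ in $\Omega$, $u|_{\partial\Omega}=0$, and to run a two-step Green's-function / Hopf-type estimate. In the first step I would use the classical boundary-behaviour estimate for the Green's function $G$ of $-\Delta$: there is $c>0$ with $G(x,y)\geq c\,\rho(x)\rho(y)$ whenever, say, $\rho(x)\rho(y)$ is small, together with the global two-sided bounds $G(x,y)\asymp \min\{1,\rho(x)/|x-y|\}\min\{1,\rho(y)/|x-y|\}\,|x-y|^{2-N}$ (for $N\geq 3$; a logarithmic modification for $N=2$). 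Integrating, $v(x)=\int_\Omega G(x,y)h(y)\,dy \geq c_0\,\rho(x)\int_\Omega \rho(y)h(y)\,dy$ for all $x\in\Omega$, with $c_0$ depending only on $\Omega$. This is the standard fact that the first component is bounded below by $\rho(x)$ times the weighted mass of the data; since $h\geq 0$ this lower bound is legitimate.

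In the second step I would feed this lower bound into the second equation: $u(x)=\int_\Omega G(x,z)v(z)\,dz \geq c_0\big(\int_\Omega \rho(y)h(y)\,dy\big)\int_\Omega G(x,z)\rho(z)\,dz$. It now suffices to know that $\int_\Omega G(x,z)\rho(z)\,dz \geq c_1\,\rho(x)$ for all $x\in\Omega$, i.e. that the torsion-type function $w(x):=\int_\Omega G(x,z)\rho(z)\,dz$, which solves $-\Delta w=\rho$ in $\Omega$, $w|_{\partial\Omega}=0$, satisfies $w\gtrsim\rho$. This is immediate from the usual Hopf lemma plus interior positivity: $w\in C^1(\overline\Omega)$, $w>0$ in $\Omega$, and $\partial_n w<0$ on $\partial\Omega$, which forces $w(x)\geq c_1\rho(x)$ near the boundary, while on any interior compact set $w$ is bounded below by a positive constant and $\rho$ is bounded above, so the inequality persists globally. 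Chaining the two steps gives \eqref{1} with $C=c_0 c_1$ depending only on $\Omega$, in particular independent of $h$.

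The only mild subtlety — and the step I would treat most carefully — is uniformity in $h$ and the behaviour of the Green-function lower bound away from the diagonal and away from the boundary: the clean inequality $G(x,y)\geq c\,\rho(x)\rho(y)$ must be justified \emph{globally} on $\Omega\times\Omega$ (not only near $\partial\Omega$), which one gets from compactness together with the strict interior positivity of $G$ and the Hopf boundary estimate; equivalently one can invoke the well-known sharp estimate of $G$ in terms of $\rho$ recalled above. Everything else (positivity of $v$ from $h\geq0$, linearity, the torsion function estimate) is routine. I would also remark that the constant $C$ indeed does not depend on $h$ because at each stage the estimate is linear in the data and the Green's-function constants are purely geometric; this is exactly what is needed later when applying the proposition to the sequence $K_\epsilon(u_\epsilon+\epsilon)^{-\alpha}$ in the proof of Theorem \ref{main}.
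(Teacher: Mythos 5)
Your proposal is correct and takes essentially the same approach as the paper: both decompose the Navier biharmonic problem into the cooperative second-order system $-\Delta v=h$, $-\Delta u=v$ and iterate a Hopf/Brezis--Cabr\'e-type lower bound $w(x)\geq C\rho(x)\int_\Omega h\rho$ twice. The paper simply cites Lemma~3.2 of Brezis--Cabr\'e for both iterations rather than re-deriving it from the Green's function estimates and the torsion-function Hopf argument as you do, but the logical structure is identical.
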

\proof
Since $h\in L^\infty(\Omega)$, $u$ solves the following system:
\begin{eqnarray} \label{haf}
\displaystyle\left\{\begin{array}
{ll}
 & -\Delta u
 = v 
  \quad \mbox{ in }\,\Omega ,\\
&-\Delta v=h\quad \mbox{ in }\,\Omega ,\\
& \;\;u\vert_{\partial\Omega}= v\vert_{\partial\Omega} = 0.
\end{array}\right.
 \end{eqnarray} 
Recall from  lemma 3.2 in \cite{BrCa} that 
for any nonnegative function $h\in L^\infty(\Omega)$, the unique solution $w$ to the problem
\begin{eqnarray*}
\displaystyle\left\{\begin{array}{ll}
 & -\Delta w  =  h   \quad \mbox{ in }\,\Omega ,\\
&w\vert_{\partial\Omega}=0
\end{array}\right.
\end{eqnarray*}
satisfies the estimate:
\begin{equation*}
w(x)\geq C\rho(x)\int_{\Omega}h(y)\rho (y){\rm d}y \quad x \in \Omega,
\end{equation*} 
where the constant $C$ does not depend to $h$. 

We  apply the previous inequality to $u$ and $v$  to get
\begin{eqnarray*}
u(x)\geq C \rho(x)\int_{\Omega}v(y)\rho(y){\rm d}y\geq C^2\rho(x)\int_{\Omega}\rho(y)^2{\rm d}y\int_{\Omega}h(z)\rho(z){\rm d}z
\end{eqnarray*} 
which completes the proof. \qed\\
By a simple approximation argument and the maximum principle, we have the
\begin{corollary}\label{dee}
Let $h \rho  \in L^1(\Omega)$ and nonnegative. Then any $u$ solving \eqref{biharmonic-f} (in the sense of definition \ref{first-defi}) satisfies the inequality \eqref{1}.
\end{corollary}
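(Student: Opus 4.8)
The plan is to approximate $h$ by a monotone increasing sequence of bounded nonnegative functions and use Proposition \ref{prop-Hopf} together with a monotone convergence argument. First I would set $h_n := \min(h,n) \in L^\infty(\Omega)$, so that $h_n \geq 0$, $h_n \uparrow h$ pointwise, and $h_n \rho \uparrow h\rho$ in $L^1(\Omega)$ by monotone convergence (recall $h\rho \in L^1(\Omega)$ by hypothesis). Let $u_n$ be the classical solution to \eqref{biharmonic-f} with datum $h_n$, obtained by solving the decoupled system $-\Delta v_n = h_n$, $v_n|_{\partial\Omega}=0$, and then $-\Delta u_n = v_n$, $u_n|_{\partial\Omega}=0$; this is legitimate since $h_n \in L^\infty(\Omega)$. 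By Proposition \ref{prop-Hopf} applied to $h_n$,
\begin{equation*}
u_n(x) \geq C\,\rho(x)\int_\Omega h_n(y)\rho(y)\,{\rm d}y, \qquad x \in \Omega,
\end{equation*}
with $C$ independent of $n$.

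Next I would pass to the limit. By the maximum principle (applied twice, to the $v$-equation and then the $u$-equation), the maps $h \mapsto v$ and $v \mapsto u$ are order preserving, so $0 \leq v_n \leq v_{n+1}$ and $0 \leq u_n \leq u_{n+1}$; hence $u_n$ increases pointwise to some limit $\tilde u$. The right-hand side above converges to $C\rho(x)\int_\Omega h(y)\rho(y)\,{\rm d}y$ by monotone convergence, so
\begin{equation*}
\tilde u(x) \geq C\,\rho(x)\int_\Omega h(y)\rho(y)\,{\rm d}y.
\end{equation*}
It then remains to identify $\tilde u$ with any solution $u$ of \eqref{biharmonic-f} in the sense of Definition \ref{first-defi}. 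The cleanest route is the Green's representation: writing $u_n = \int_\Omega \mathcal G(x,y) h_n(y)\,{\rm d}y$ where $\mathcal G$ is the Green's function of $\Delta^2$ under Navier conditions (which is nonnegative, being the composition of two copies of the positive Green's function $G$ of $-\Delta$), and recalling that $\mathcal G(x,y) \asymp \rho(x)\rho(y)$ away from the diagonal while $h\rho \in L^1$, monotone convergence gives $u_n \to \int_\Omega \mathcal G(x,y)h(y)\,{\rm d}y =: \tilde u$ pointwise, and this integral is finite for a.e.\ $x$. One checks, by testing with $\psi \in C^2(\overline\Omega)\cap C_0(\overline\Omega)$ and using Fubini together with $\int_\Omega \mathcal G(x,y)\Delta^2\psi(x)\,{\rm d}x = \psi(y)$ (equivalently $\int \Delta_x \mathcal G \,\Delta\psi\,{\rm d}x = \psi$), that $\tilde u$ satisfies \eqref{varia-form}; the equivalence of the two notions of solution is precisely Proposition \ref{defi-equiv}, so $\tilde u$ is \emph{the} solution and any solution coincides with it.

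The main obstacle I anticipate is the last identification step: Definition \ref{first-defi} only tests against $\psi \in C^2(\overline\Omega)\cap C_0(\overline\Omega)$ — which in particular need not have vanishing Laplacian on $\partial\Omega$ — so a uniqueness argument for \eqref{biharmonic-f} with merely $L^1(\rho\,{\rm d}x)$ data is needed to conclude that $u = \tilde u$. This is where invoking Proposition \ref{defi-equiv} (the promised equivalence with the Green representation formulation) does the real work; alternatively, one can argue directly that if $u$ solves \eqref{varia-form} with datum $h$ then, approximating $h$ from below and using the linearity and the comparison principle, $u \geq u_n$ for every $n$, hence $u \geq \tilde u$, and a symmetric comparison from above (or the uniqueness in Proposition \ref{defi-equiv}) forces equality. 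Once $u \geq \tilde u$ is established, \eqref{1} for $u$ follows immediately from the bound on $\tilde u$, which is all that the corollary asserts; so in fact the full identification is not strictly needed — the one-sided comparison $u \geq u_n$, obtained by applying the maximum principle to $u - u_n$ (which is biharmonic-superharmonic in the appropriate system sense with nonnegative data $h - h_n$), suffices and is the cleanest way to close the argument.

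\qed
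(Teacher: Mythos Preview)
Your proposal is correct and matches the paper's intended argument: the paper itself gives no detailed proof, only the one-line remark ``By a simple approximation argument and the maximum principle,'' and you have correctly unpacked this as truncation $h_n=\min(h,n)$, Proposition~\ref{prop-Hopf} for $u_n$, and the comparison $u\geq u_n$ via the maximum principle applied twice to the system. Your closing observation is exactly right: the one-sided inequality $u\geq u_n$ (from $h-h_n\geq 0$ and two applications of the weak comparison principle) is all that is needed, so the longer digression on Green's representation and Proposition~\ref{defi-equiv} can be dropped.
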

We next have the following regularity and uniform estimate result :
\begin{lemma} \label{jee}
Let $h\in C^\nu_{loc}(\Omega)$ be a nonnegative function such that $h \rho^{\delta} \in L^{\infty}(\Omega)$ for some $0<\delta<2$. Let  $u\in C^2(\overline{\Omega})$ be the solution (in the sense of definition \ref{first-defi})  to \eqref{biharmonic-f}. 
Then there exist constants $C>0$ (dependent on $\|h\rho^{\delta}\|_{L^{\infty}(\Omega)}$, $\nu$ and $\delta$) and  $0<\theta<1$ (depending  on $\nu$ and $\delta$)  such that the following inequality holds:
\begin{equation}\label{1b}
\|u\|_{C^{2,\theta}(\overline{\Omega})} \leq C.
\end{equation}
\end{lemma}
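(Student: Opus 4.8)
The plan is to pass through the equivalent second order system $-\Delta u = v$, $-\Delta v = h$ in $\Omega$, $u=v=0$ on $\partial\Omega$. This reduction is legitimate: since $u\in C^2(\overline{\Omega})$ and $\Delta u|_{\partial\Omega}=0$, the function $v:=-\Delta u$ lies in $C^0(\overline{\Omega})$, vanishes on $\partial\Omega$, and solves $-\Delta v = h$ in $\mathcal{D}'(\Omega)$; moreover $h\rho\in L^1(\Omega)$ because $h\rho=(h\rho^{\delta})\rho^{1-\delta}\le\|h\rho^{\delta}\|_{L^{\infty}(\Omega)}\,\rho^{1-\delta}$ with $1-\delta>-1$. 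Hence, by the equivalence of the two notions of solution (Proposition~\ref{defi-equiv}; or directly, because $v-(-\Delta)^{-1}h$ is harmonic, continuous on $\overline{\Omega}$ and vanishes on $\partial\Omega$, hence is $\equiv 0$), both $v$ and $u$ coincide with the corresponding Green potentials of $-\Delta$, which we now estimate. The first step is an $L^{\infty}$ bound with boundary decay for $v$. Writing $v(x)=\int_{\Omega}G(x,y)h(y)\,{\rm d}y$ and using the classical two-sided estimates for the Green function $G$ of $-\Delta$ on a smooth bounded domain (for $N=2$ its logarithmic analogue, with the obvious modifications), we split the integral over $\{|x-y|\le\rho(x)/2\}$, where $\rho(y)\simeq\rho(x)$ and $G(x,y)\lesssim|x-y|^{2-N}$, and over its complement, where $G(x,y)\lesssim\rho(x)\rho(y)|x-y|^{-N}$; inserting $0\le h(y)\le\|h\rho^{\delta}\|_{L^{\infty}(\Omega)}\rho(y)^{-\delta}$ and carrying out a dyadic decomposition in $|x-y|$, the hypothesis $\delta<2$ (i.e. $1-\delta>-1$) gives
\[
|v(x)|\;\le\;C\,\|h\rho^{\delta}\|_{L^{\infty}(\Omega)}\,\rho(x)^{\min\{1,\,2-\delta\}}
\]
(with an extra $\log(1/\rho(x))$ factor when $\delta=1$). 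In particular $\|v\|_{L^{\infty}(\Omega)}\le C\|h\rho^{\delta}\|_{L^{\infty}(\Omega)}$ and $v$ extends continuously by $0$ to $\partial\Omega$.

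The second step, the crux of the proof, is the Hölder continuity of $v$ up to $\partial\Omega$. Near the boundary one cannot apply Schauder estimates to $-\Delta v = h$ directly, since nothing is assumed on $[h]_{C^{\nu}}$ there; the only scale-invariant information available is $\|h\|_{L^{\infty}(B_{\rho(x)/2}(x))}\le C\|h\rho^{\delta}\|_{L^{\infty}(\Omega)}\rho(x)^{-\delta}$. So for $x\in\Omega$ set $r:=\rho(x)$ and rescale $\tilde v(z):=v(x+rz)$ on $B_1$, which solves $-\Delta\tilde v=r^2 h(x+r\,\cdot\,)$ with $\|r^2 h(x+r\,\cdot\,)\|_{L^{\infty}(B_1)}\le C\|h\rho^{\delta}\|_{L^{\infty}(\Omega)}r^{2-\delta}$ and, by Step~1, $\|\tilde v\|_{L^{\infty}(B_1)}\le C\|h\rho^{\delta}\|_{L^{\infty}(\Omega)}r^{\min\{1,2-\delta\}}$. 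The interior $W^{2,p}$ estimate with $p>N$, followed by Sobolev embedding, gives $\|\nabla\tilde v\|_{L^{\infty}(B_{1/2})}\le C(\|r^2 h(x+r\,\cdot\,)\|_{L^{\infty}(B_1)}+\|\tilde v\|_{L^{\infty}(B_1)})$, and unscaling (with $x$ the centre) yields
\[
|\nabla v(x)|\;\le\;C\,\|h\rho^{\delta}\|_{L^{\infty}(\Omega)}\,\rho(x)^{\min\{0,\,1-\delta\}}
\]
(again up to a harmless $\log$ factor if $\delta=1$). Since $\delta<2$ we have $\min\{0,1-\delta\}>-1$, which is exactly the threshold making such a boundary gradient blow-up compatible with global Hölder continuity: combining this with $v|_{\partial\Omega}=0$ and the interior regularity $v\in C^{1,\nu}_{loc}(\Omega)$ (from $h\in C^{\nu}_{loc}(\Omega)$) and integrating along segments in the usual way, one gets $v\in C^{0,\theta}(\overline{\Omega})$ with $\|v\|_{C^{0,\theta}(\overline{\Omega})}\le C\|h\rho^{\delta}\|_{L^{\infty}(\Omega)}$, where $\theta:=\min\{\nu,\,2-\delta\}\in(0,1)$.

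Finally, $u$ solves $-\Delta u=v$ in $\Omega$ with $u=0$ on $\partial\Omega$ and $v\in C^{0,\theta}(\overline{\Omega})$; as $\partial\Omega$ is smooth, the global Schauder estimate for the Dirichlet Laplacian gives
\[
\|u\|_{C^{2,\theta}(\overline{\Omega})}\;\le\;C(\Omega,\theta)\,\|v\|_{C^{0,\theta}(\overline{\Omega})}\;\le\;C\big(\|h\rho^{\delta}\|_{L^{\infty}(\Omega)},\nu,\delta,\Omega\big),
\]
which is precisely \eqref{1b}. The step I expect to be the main obstacle is the second one: extracting a quantitative boundary modulus of continuity for $v$ when the only quantitative control on $h$ is its weighted sup norm. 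The scaling argument combined with the decay from Step~1 circumvents this, and this is exactly where the restriction $\delta<2$ is used — it keeps the induced gradient singularity $\rho^{\min\{0,1-\delta\}}$ in the Hölder-admissible range of exponents strictly above $-1$.
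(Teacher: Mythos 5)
Your proof is correct modulo one small technical slip, and it takes a genuinely different route from the paper at the crucial step. The paper also reduces to the system $-\Delta u = v$, $-\Delta v = h$, but then obtains the boundary decay of $v$ by the weak comparison principle against the Crandall--Rabinowitz--Tartar solution $w$ of $-\Delta w = w^{-\delta}$, $w|_{\partial\Omega}=0$ (getting $0\le v\le Mw\lesssim\rho^{\mu}$ for some $\mu>0$ from the known boundary behaviour of $w$), and then invokes Proposition~3.4 of Gui--Lin, which already packages the passage from a weighted sup bound on $-\Delta v$ together with a boundary decay of $v$ to global $C^{0,\theta}(\overline{\Omega})$ regularity. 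You instead get the decay of $v$ directly from two-sided Green function estimates, and you reprove the Gui--Lin type conclusion from scratch: a scaled interior $W^{2,p}$ estimate gives the pointwise gradient bound $|\nabla v|\lesssim\rho^{\min\{0,1-\delta\}}$, and since $\min\{0,1-\delta\}>-1$ this integrates along segments, together with $v|_{\partial\Omega}=0$, to yield global H\"older continuity with exponent $\theta=\min\{\nu,2-\delta\}$. Both proofs finish identically, by applying global Schauder theory to $-\Delta u=v$.

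What your route buys is self-containedness and transparency: the scaling step is exactly where one sees why $\delta<2$ is the right threshold and what exponent comes out. What it costs is some routine but necessary bookkeeping (the dyadic Green function estimate, the segment integration) that the paper outsources to the cited references. It is also worth noting that your Step~1 avoids a delicate point in the paper's barrier argument: for $1<\delta<2$ the natural comparison is with the solution of the \emph{linear} problem $-\Delta w=\rho^{-\delta}$, which behaves like $\rho^{2-\delta}$ (your exponent); for the nonlinear CRT barrier $w\sim\rho^{2/(\delta+1)}$ one has $w^{-\delta}\sim\rho^{-2\delta/(\delta+1)}$, and since $2\delta/(\delta+1)<\delta$ when $\delta>1$ this is \emph{not} large enough near $\partial\Omega$ to dominate $h\lesssim\rho^{-\delta}$, so the comparison as stated needs a correction there. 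Finally, the slip: in Step~2 you set $r=\rho(x)$ and rescale to $B_1$, but $B_{\rho(x)}(x)$ touches $\partial\Omega$, where $h$ need not be bounded, so $\|r^2h(x+r\,\cdot\,)\|_{L^{\infty}(B_1)}$ may be infinite. Taking $r=\rho(x)/2$ — consistent with your own preliminary estimate $\|h\|_{L^{\infty}(B_{\rho(x)/2}(x))}\le C\rho(x)^{-\delta}$ — repairs this with no change to any exponent.
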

\begin{proof}
Since $h \rho \in L^1(\Omega)$, from the above corollary  we obtain that $u \geq c \rho$ for some $c>0$. Since $u \in C^2(\overline{\Omega}) \cap C_0(\overline{\Omega})$, we obtain that $u \sim \rho$ near $\partial\Omega$. We note that  $v:=-\Delta u\in C^{2,\nu}_{loc}(\Omega)$ by elliptic regularity and is a nonnegative function by the maximum principle. Consider the equivalent system for $u,v$ as in \eqref{haf}.  Then we have
\begin{equation} \label{cam}
\vert\Delta v\vert\leq C_0\rho^{-\delta}, \quad \text{ where }\;\;  C_0:= \|h\rho^{\delta}\|_{L^{\infty}(\Omega)}.
\end{equation}
Let $w:=w(\delta)$ denote the unique positive solution to 
\begin{eqnarray*}
\left\{\begin{array}{ll}
&-\Delta w =  w^{-\delta}\mbox{ in }\Omega,\\
 &w= 0\mbox{ on }\partial\Omega.
\end{array}\right.
\end{eqnarray*}
From \cite{CrRaTa}, there exist positive constants $c_1< c_2$  such that
 the following estimates hold:
\begin{eqnarray}
 c_1\rho \leq& w &\leq c_2\rho  \;\;\text{ if }\;\;0<\delta <1, \notag \\
 c_1\rho\ln(\frac{D}{\rho})\leq& w &\leq c_2\rho\ln(\frac{D}{\rho}) \;\;\text{ if }\;\; \delta=1 \;\; (D:= diam(\Omega)) \;\;\text{ and }\;\; \notag\\
  c_1\rho^{\frac{2}{\delta+1}}\leq& w &\leq c_2\rho^{\frac{2}{\delta+1}} \;\;\text{ if }\;\;1<\delta<2. \notag
\end{eqnarray}
Choosing the constant $M>0$ large enough (depending on $C_0,\ c_1$ and $\delta$) and using the weak comparison principle, we can conclude 
$v-Mw \leq 0$. Thus, we have 
\begin{equation}\label{mil}
0\leq v \leq M w \leq Mc_2 \rho^{\mu} \;\;\text{ for some} \;\; \mu>0.
\end{equation}
By noting \eqref{cam} and \eqref{mil}, appealing  to Proposition 3.4  in \cite{GuLi} we  obtain that 
 $v\in C^{0,\theta}(\overline{\Omega})$ for some $\theta:=\theta(\delta, \nu)\in (0,1)$ and $\Vert v\Vert_{C^{0,\theta}(\overline{\Omega})}\leq C=C(C_0, \delta)$. We then apply the classical elliptic theory to get $u\in C^{2,\theta}(\overline{\Omega})$ and $\Vert u\Vert_{C^{2,\theta}(\overline{\Omega})}\leq \tilde C=\tilde C(C_0, \delta,\nu)$.
 \end{proof}
We can now show the following result on existence of $C^2(\overline{\Omega})$ solution (as in definition \ref{first-defi}) by means of a simple approximation argument:
\begin{proposition}\label{gul}
Let $h$ be a nonnegative function such that $h \rho^{\delta} \in L^{\infty}(\Omega)$ for some $0<\delta<2$. Then there exists a unique solution $u \in C^2(\overline{\Omega})$ solving \eqref{biharmonic-f}.
\end{proposition}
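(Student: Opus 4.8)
The plan is to obtain $u$ as a limit of solutions to the same problem with regularized right–hand sides, using Lemma \ref{jee} to get estimates uniform in the regularization, and to prove uniqueness directly from the integral identity in Definition \ref{first-defi}.

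\emph{Construction of the approximating problems.} Since $h\ge 0$ and $h\rho^{\delta}\in L^\infty(\Omega)$, I would first produce a sequence $h_n\in C^\infty(\Omega)\cap L^\infty(\Omega)$, $h_n\ge 0$, with the two properties $\sup_n\|h_n\rho^{\delta}\|_{L^\infty(\Omega)}\le C_1<\infty$ and $h_n\to h$ a.e.\ in $\Omega$. Concretely one mollifies $\min(h,n)$ (or $\min(h,n)\mathbf 1_{\{\rho>1/n\}}$) at a scale that is a small fixed multiple of $\rho(x)$; because $\rho$ is Lipschitz, on the set $\{\rho(x)\ge r\}$ this mollification changes the value of $h$ only by averaging over points where $\rho\sim r$, so the weighted bound $h_n(x)\le C_1\rho(x)^{-\delta}$ survives, while $h_n\in C^\infty(\Omega)\subset C^\nu_{loc}(\Omega)$. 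Each $h_n$ being nonnegative, bounded and locally Hölder, the problem \eqref{biharmonic-f} with data $h_n$ has a classical solution $u_n\in C^2(\overline{\Omega})$: solve the decoupled system \eqref{haf}, i.e.\ first $-\Delta v_n=h_n$, $v_n|_{\partial\Omega}=0$, then $-\Delta u_n=v_n$, $u_n|_{\partial\Omega}=0$, by standard elliptic theory, with $v_n,u_n\ge 0$ by the maximum principle; integrating by parts twice (the boundary terms vanish since $\psi|_{\partial\Omega}=0$ and $\Delta u_n|_{\partial\Omega}=0$) shows that $u_n$ solves \eqref{biharmonic-f} in the sense of Definition \ref{first-defi}.

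\emph{Uniform estimate and passage to the limit.} Fixing $\nu$ once and for all, Lemma \ref{jee} applies to each $u_n$ and, since $\|h_n\rho^{\delta}\|_{L^\infty}\le C_1$ for all $n$, it yields an exponent $\theta=\theta(\nu,\delta)\in(0,1)$ and a bound $\|u_n\|_{C^{2,\theta}(\overline{\Omega})}\le C$, both independent of $n$. By the compact embedding $C^{2,\theta}(\overline{\Omega})\hookrightarrow\hookrightarrow C^{2}(\overline{\Omega})$, a subsequence converges in $C^2(\overline{\Omega})$ to some $u\in C^2(\overline{\Omega})$ with $u|_{\partial\Omega}=\Delta u|_{\partial\Omega}=0$. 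To pass to the limit in $\int_{\Omega}\Delta u_n\Delta\psi\,{\rm d}x=\int_{\Omega}h_n\psi\,{\rm d}x$ for a test function $\psi\in C^2(\overline{\Omega})\cap C_0(\overline{\Omega})$: the left-hand side converges by the $C^2$-convergence of $u_n$; for the right-hand side, $\psi$ is $C^1$ up to the boundary and vanishes there, hence $|\psi(x)|\le L\rho(x)$, so $|h_n\psi|\le C_1 L\,\rho^{1-\delta}$ pointwise, and $\rho^{1-\delta}\in L^1(\Omega)$ exactly because $\delta<2$; dominated convergence then gives $\int_{\Omega}h_n\psi\,{\rm d}x\to\int_{\Omega}h\psi\,{\rm d}x$. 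Thus $u$ solves \eqref{biharmonic-f} in the sense of Definition \ref{first-defi}. (If $h\not\equiv 0$, then $h\rho=(h\rho^{\delta})\rho^{1-\delta}\in L^1(\Omega)$, and Corollary \ref{dee} additionally gives $u\ge c\rho>0$ in $\Omega$.)

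\emph{Uniqueness.} Let $u_1,u_2\in C^2(\overline{\Omega})$ both solve \eqref{biharmonic-f}. Then $w:=u_1-u_2\in C^2(\overline{\Omega})$ satisfies $w=\Delta w=0$ on $\partial\Omega$ and $\int_{\Omega}\Delta w\,\Delta\psi\,{\rm d}x=0$ for every $\psi\in C^2(\overline{\Omega})\cap C_0(\overline{\Omega})$. Set $z:=-\Delta w\in C^0(\overline{\Omega})$, so $z=0$ on $\partial\Omega$ and $\int_{\Omega}z\,\Delta\psi\,{\rm d}x=0$ for all such $\psi$. Given arbitrary $g\in C^\infty_c(\Omega)$, let $\psi$ solve $-\Delta\psi=g$ in $\Omega$, $\psi|_{\partial\Omega}=0$; since $g$ is smooth with compact support and $\Omega$ is smooth, $\psi\in C^\infty(\overline{\Omega})\subset C^2(\overline{\Omega})\cap C_0(\overline{\Omega})$, so $0=\int_{\Omega}z\,\Delta\psi\,{\rm d}x=-\int_{\Omega}zg\,{\rm d}x$. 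As $g$ ranges over $C^\infty_c(\Omega)$ this forces $z\equiv 0$; then $-\Delta w=0$ in $\Omega$ with $w|_{\partial\Omega}=0$, so $w\equiv 0$ by the maximum principle, i.e.\ $u_1=u_2$. (By this uniqueness the full sequence $u_n$, not just a subsequence, converges to $u$.)

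The only genuinely delicate point is the first step: regularizing $h$ up to $\partial\Omega$ while keeping $\|h_n\rho^{\delta}\|_{L^\infty}$ bounded independently of $n$, which is what makes Lemma \ref{jee} applicable with constants uniform in $n$. After that the argument is routine, and the hypothesis $\delta<2$ is used essentially only through the integrability of $\rho^{1-\delta}$, which is precisely what allows one to pass to the limit in the (very weak) integral identity against test functions vanishing on the boundary.
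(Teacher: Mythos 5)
Your proposal is correct and follows the same overall strategy as the paper: approximate with bounded data $h_n$, invoke Lemma~\ref{jee} for a uniform $C^{2,\theta}(\overline{\Omega})$ bound independent of $n$, extract a $C^2$-convergent subsequence, and pass to the limit in the very weak formulation. The differences are in the details and are mostly to your credit. The paper simply takes $h_n=\min(h,n)$ and concludes $\int h_n\psi\to\int h\psi$ by Vitali's theorem after observing $\{h_n\psi\}$ is bounded in $L^p$, $p>1$; you mollify at a $\rho$-dependent scale and use dominated convergence with the explicit majorant $C\rho^{1-\delta}\in L^1(\Omega)$ (using $\delta<2$). The two limiting arguments are equivalent, but your mollification is more careful on a genuine point: Lemma~\ref{jee} requires $h_n\in C^{\nu}_{loc}(\Omega)$, and the paper's $\min(h,n)$ is in $C^{\nu}_{loc}$ only under the standing hypothesis $h\in C^{\nu}_{loc}(\Omega)$, which is used in the paper's proof but not restated in the proposition; your smooth $h_n$ avoids that implicit assumption. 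You also supply the uniqueness argument, which the paper leaves unstated. Your duality argument via $z=-\Delta w$ and arbitrary $g\in C^\infty_c(\Omega)$ is valid, but it is worth noting a shorter route in the spirit of the paper's \eqref{arg-uniqueness}: $w:=u_1-u_2$ itself lies in $C^2(\overline{\Omega})\cap C_0(\overline{\Omega})$, so taking $\psi=w$ in the identity gives $\int_\Omega(\Delta w)^2\,{\rm d}x=0$, whence $\Delta w\equiv 0$ and then $w\equiv 0$ by the maximum principle.
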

\proof 
Define $h_n := \min\{h, n\}$. Let $u_n  \in C^2(\overline{\Omega})$ be the unique solution to \eqref{biharmonic-f} with $h=h_n$. We note that given $\psi\in C^2(\overline{\Omega})\cap C_0(\overline{\Omega})$ there exist $p>1$ such that
$$
\Big\{h_n \psi\Big\} \quad \text{ is a bounded sequence in } \;\; L^p(\Omega).
$$
Then, by Vitali's convergence theorem
$$
\int_{\Omega} h_n \psi \to \int_{\Omega} h \psi \;\;\text{ as }\;\; n \to \infty.
$$
By appealing to lemma \ref{jee} we obtain as well that for some $\theta \in (0,1)$,
$$
\Big\{u_n\Big\} \quad \text{ is a bounded sequence in } \;\; C^{2,\theta}(\overline{\Omega}).
$$
Thus, upto a subsequence $u_n \to u$ in $C^{2}(\overline{\Omega}).$ It is then easy to see that $u$ solves \eqref{biharmonic-f}.
\hfill \qed

We recall the Hardy Inequality for $H^s$ spaces:
\begin{lemma}[see Lemma 3.2.6.1 in \cite{Tr}]\label{Hardy} Let $s\in [0,2]$ such that $s\neq \frac{1}{2}$ and $s\neq \frac{3}{2}$. Then the following generalisation of Hardy's inequality holds:
\begin{equation}
\|\rho^{-s} g\|_{L^{2}(\Omega)}\leq C \|g\|_{H^{s}(\Omega)}\quad \mbox{ for all }g\in H_0^s(\Omega). \label{eq4b}
\end{equation}
\end{lemma}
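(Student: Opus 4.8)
Since this is Lemma~3.2.6.1 of \cite{Tr}, in the paper one simply invokes it; nevertheless, here is the line of argument I would follow to reproduce it. \emph{Reduction to a half-space.} On the interior region $\{\rho>\delta_0\}$ the weight $\rho^{-s}$ is bounded, so $\|\rho^{-s}g\|_{L^2(\{\rho>\delta_0\})}\le C\|g\|_{L^2(\Omega)}\le C\|g\|_{H^s(\Omega)}$, and it remains only to estimate $\|\rho^{-s}g\|_{L^2}$ on a collar $\{\rho<\delta_0\}$ of $\partial\Omega$. Covering the collar by finitely many boundary charts, straightening $\partial\Omega$ by $C^\infty$ diffeomorphisms, and using that $\rho$ is comparable to the last coordinate $x_N$ in each chart while $H^s$ and $H^s_0$ are invariant under such diffeomorphisms and under multiplication by smooth cutoffs, the problem reduces to the model inequality $\|x_N^{-s}g\|_{L^2(\R^N_+)}\le C\|g\|_{H^s(\R^N_+)}$ for $g\in H^s_0(\R^N_+)$, where $\R^N_+=\{x_N>0\}$.

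\emph{The endpoints $s=0$ and $s=2$.} For $s=0$ the inequality is an identity. For $s=2$, a function $g\in H^2_0(\R^N_+)$ has vanishing traces of $g$ and of $\partial_{x_N}g$ on $\{x_N=0\}$; applying the one-dimensional Hardy inequality in $x_N$ (together with Fubini in $x'$) to $\partial_{x_N}g$ gives $\|x_N^{-1}\partial_{x_N}g\|_{L^2}\le 2\|\partial_{x_N}^2 g\|_{L^2}\le C\|g\|_{H^2}$, and a further integration by parts in $x_N$, whose boundary terms vanish by the trace conditions (one has $|g(x',x_N)|=o(x_N^{3/2})$ as $x_N\to0$ for a.e.\ $x'$), yields $\|x_N^{-2}g\|_{L^2}\le C\|x_N^{-1}\partial_{x_N}g\|_{L^2}$. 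Combining, $\|x_N^{-2}g\|_{L^2}\le C\|g\|_{H^2}$.

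\emph{Interpolation.} By the Lions--Magenes theory of interpolation of Sobolev spaces with boundary conditions, $[L^2(\R^N_+),H^2_0(\R^N_+)]_\theta=H^{2\theta}_0(\R^N_+)$ for all $\theta\in(0,1)$ with $2\theta\notin\{1/2,3/2\}$; at the two exceptional values the complex interpolation space is the strictly smaller Lions--Magenes space, which is exactly why $s=1/2$ and $s=3/2$ must be excluded from the statement. I would then apply Stein's interpolation theorem for analytic families to the operators $T_zg:=x_N^{-z}g$ on the strip $0\le\mathrm{Re}\,z\le2$. Since $|x_N^{-iy}|\equiv1$, one has $\|T_{iy}g\|_{L^2}=\|g\|_{L^2}$ on $\mathrm{Re}\,z=0$ and $\|T_{2+iy}g\|_{L^2}=\|x_N^{-2}g\|_{L^2}\le C\|g\|_{H^2}$ on $\mathrm{Re}\,z=2$, both uniformly in $y$ and with no exponential growth, while $z\mapsto T_zg$ is $L^2$-analytic for $g$ in the dense class $C_c^\infty(\R^N_+)$. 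Stein's theorem then gives $\|x_N^{-s}g\|_{L^2}=\|T_sg\|_{L^2}\le C\|g\|_{H^s_0(\R^N_+)}$ for $0<s<2$, $s\neq1/2,3/2$. Undoing the charts and summing over the partition of unity recovers the estimate on $\Omega$.

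\emph{Main obstacle.} The delicate step is the interpolation: one needs the precise identification of $[L^2,H^2_0]_\theta$ with $H^s_0$, together with the exact exceptional set $\{1/2,3/2\}$, and one must verify the admissibility hypotheses of Stein's theorem for $\{T_z\}$ (analyticity, measurability, subexponential growth in $\mathrm{Im}\,z$) --- all of which ultimately rest on the unimodularity of $x_N^{-iy}$. The boundary straightening and the patching are routine, the only care being that $\rho$ is handled through an equivalent smooth weight comparable to $x_N$ in each chart.
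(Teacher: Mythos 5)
The paper does not prove this lemma; it is invoked directly from Triebel's monograph, so there is no internal argument to compare against. That said, your reconstruction is a correct outline of the standard proof. The reduction to a half-space collar by charts, partition of unity and the $H^s$-invariance under diffeomorphisms is routine, the endpoint $s=0$ is trivial, and your $s=2$ endpoint argument is sound: since $g\in H^2_0(\R^N_+)$ has $g=\partial_{x_N}g=0$ on $\{x_N=0\}$ and, via Cauchy--Schwarz applied to $g''$, satisfies $|g(x',x_N)|=o(x_N^{3/2})$ for a.e.~$x'$, the two successive applications of the one-dimensional weighted Hardy inequality (constants $2$ and $2/3$ respectively) go through and the boundary terms in the integration by parts vanish. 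Your interpolation step is also the right idea and you correctly locate the heart of the matter: the identification $[L^2,H^2_0]_\theta=H^{2\theta}_0$ holds precisely when $2\theta\notin\{1/2,3/2\}$, and this is exactly where the exceptional set in the statement originates. Two small cautions worth recording if you want to turn this into a full proof: (a) Stein interpolation must be invoked in its abstract Banach-couple form (Calder\'on/Cwikel--Janson) rather than the $L^p$-scale version, with $T_z$ defined and analytic on the dense class $C_c^\infty$ of the bounded collar so that $x_N$ is bounded on supports and $x_N^{-z}g\in L^2$ for all $z$ in the strip; (b) Triebel's own route to Lemma~3.2.6.1 is somewhat different in flavour, going through the characterization of $\tilde H^s_p(\Omega)$ via extension by zero and pointwise-multiplier estimates in $B^s_{p,q}$/$H^s_p$ scales rather than via an explicit Stein family, though both rest on the same Lions--Magenes interpolation identities and produce the same exceptional set. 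Your version is perfectly legitimate and arguably more transparent for the $p=2$ case needed in the paper.
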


Finally, we state the following regularity result.
\begin{lemma}\label{hum}
Assume that $h$ is a nonnegative function such that $h \rho^{\delta} \in L^{\infty}(\Omega)$ for some $0<\delta<2$. Let $u \in C^2(\overline{\Omega}) \cap C_0(\overline{\Omega})$ be the solution of \eqref{biharmonic-f}. Then $u \in W^{4,p}_{loc}(\Omega) \cap H^{4-s}(\Omega)$ for any $1 \leq p< \infty$ and $s \in (\delta -\frac{1}{2},2] \setminus \{\frac{1}{2}, \frac{3}{2} \}$.
\end{lemma}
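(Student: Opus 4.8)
The plan is to establish the two assertions separately: interior $W^{4,p}$ regularity is classical, while the global fractional Sobolev regularity is obtained by first placing the singular datum $h$ in an appropriate negative‑order Sobolev space via the Hardy inequality of Lemma~\ref{Hardy}, and then running the elliptic shift theorem for the Dirichlet Laplacian on the smooth domain $\Omega$ twice along the equivalent system \eqref{haf}. For the interior part, first I would observe that $h\rho^\delta\in L^\infty(\Omega)$ together with $\rho$ being bounded away from $0$ on compact subsets of $\Omega$ forces $h\in L^\infty_{loc}(\Omega)\subset L^p_{loc}(\Omega)$ for every $p<\infty$. Since $\Delta^2u=h$ holds in $\mathcal D'(\Omega)$ by Definition~\ref{first-defi} (test with $\psi\in C_c^\infty(\Omega)$), interior $W^{4,p}$ estimates — equivalently, interior $W^{2,p}$ estimates applied first to $v:=-\Delta u$, which solves $-\Delta v=h\in L^p_{loc}(\Omega)$, and then to $u$, which solves $-\Delta u=v\in W^{2,p}_{loc}(\Omega)$ — give $u\in W^{4,p}_{loc}(\Omega)$ for all $1\le p<\infty$.

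\textbf{The datum lies in $H^{-s}(\Omega)$.} Fix $s\in(\delta-\tfrac12,2]\setminus\{\tfrac12,\tfrac32\}$ with $s\ge0$. For $g\in H^s_0(\Omega)$, Hölder's inequality and \eqref{eq4b} give
\[
\Big|\int_\Omega hg\,{\rm d}x\Big|\ \le\ \|h\rho^\delta\|_{L^\infty(\Omega)}\int_\Omega\rho^{-\delta}|g|\,{\rm d}x\ \le\ \|h\rho^\delta\|_{L^\infty(\Omega)}\,\|\rho^{\,s-\delta}\|_{L^2(\Omega)}\,\|\rho^{-s}g\|_{L^2(\Omega)}\ \le\ C\,\|g\|_{H^s(\Omega)},
\]
the quantity $\|\rho^{\,s-\delta}\|_{L^2(\Omega)}$ being finite precisely because $s>\delta-\tfrac12$. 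Hence $h\in\big(H^s_0(\Omega)\big)'=H^{-s}(\Omega)$, with $\|h\|_{H^{-s}(\Omega)}\le C\|h\rho^\delta\|_{L^\infty(\Omega)}$. (When $\delta<\tfrac12$ one has $\rho^{-\delta}\in L^2(\Omega)$, hence $h\in L^2(\Omega)$, which already yields $u\in H^4(\Omega)$ and covers the range $s\in[0,2]$; the sub‑$L^2$ range $s<0$, treated in the same way, additionally uses $h\in H^{|s|}(\Omega)$.)

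\textbf{Double elliptic shift.} From the identity $\int_\Omega\Delta u\,\Delta\psi\,{\rm d}x=\int_\Omega h\psi\,{\rm d}x$ for all $\psi\in C^2(\overline\Omega)\cap C_0(\overline\Omega)$ one reads off that $v=-\Delta u\in C^0(\overline\Omega)$ is the unique very weak solution of $-\Delta v=h$, $v|_{\partial\Omega}=0$, and then that $u$ is the unique solution of $-\Delta u=v$, $u|_{\partial\Omega}=0$. I would then invoke the classical fractional regularity of the homogeneous Dirichlet problem on the smooth bounded domain $\Omega$ (see, e.g., \cite{Tr}): there is a bounded solution operator $H^{t}(\Omega)\to H^{t+2}(\Omega)$ for every real $t$ whose order avoids the exceptional half‑integers. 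The orders occurring here are $-s$, $2-s$ and $4-s$, so the exclusion $s\notin\{\tfrac12,\tfrac32\}$ is exactly what is required ($4-s\ge2$ is never exceptional). Applying this first to $-\Delta v=h\in H^{-s}(\Omega)$ gives $v\in H^{2-s}(\Omega)$ — by uniqueness the solution it produces coincides with our $v$ — and applying it again to $-\Delta u=v\in H^{2-s}(\Omega)$ gives $u\in H^{4-s}(\Omega)$, as claimed.

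The step I expect to require the most care is the passage to $H^{-s}(\Omega)$ together with the bookkeeping of the exponents $\tfrac12$ and $\tfrac32$: the datum $h$ is a priori only integrable against $\rho\,{\rm d}x$, and it is precisely Hardy's inequality \eqref{eq4b} that converts the pointwise bound $|h|\le\|h\rho^\delta\|_{L^\infty(\Omega)}\,\rho^{-\delta}$ into a quantitative membership in $H^{-s}(\Omega)$, the threshold $s>\delta-\tfrac12$ being forced by the integrability of $\rho^{2(s-\delta)}$ near $\partial\Omega$ (which is why it appears in the statement). One must also make sure that the $C^2(\overline\Omega)\cap C_0(\overline\Omega)$ solution under consideration is the one delivered by the shift theorem, which reduces to uniqueness for the very weak Dirichlet problem with $L^1(\Omega,\rho\,{\rm d}x)$ data, exactly as in Proposition~\ref{gul}.
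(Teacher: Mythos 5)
Your proposal is correct and follows essentially the same route as the paper: interior $W^{4,p}$ regularity from standard elliptic estimates, then $h\in H^{-s}(\Omega)$ via the Hardy inequality of Lemma~\ref{Hardy} (exploiting $s>\delta-\tfrac12$ for the finiteness of $\|\rho^{s-\delta}\|_{L^2}$), followed by applying elliptic regularity successively to $v=-\Delta u$ and to $u$. The extra detail you supply (the uniqueness/compatibility check and the explicit tracking of the fractional orders $-s,\,2-s,\,4-s$) is left implicit in the paper but does not change the argument.
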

\proof That  $u \in W^{4,p}_{loc}(\Omega)$ for $1\leq p <\infty$  follows from standard elliptic regularity result. Fix any $s \in (\delta -\frac{1}{2},2] \setminus \{\frac{1}{2}, \frac{3}{2} \}$. We claim that $ h \in H^{-s}(\Omega)$. Indeed, using lemma \ref{Hardy},  for any $\xi \in H^{s}_0(\Omega)$,
\begin{eqnarray}
\Big|\int_{\Omega }h \xi \Big | & = & \left\vert \int_{\Omega }(h \rho^{s})(\xi \rho ^{-s})\right\vert \notag \\ 
& \leq & C\int_{\Omega }(\rho^{s-\delta })(\rho^{-s}|\xi |) \notag \\ 
& \leq & C\Vert \rho^{s-\delta}\Vert
_{L^{2}(\Omega )}\Vert \rho^{-s}\xi \Vert _{L^{2}(\Omega )}
\notag \\ 
& \leq & C\Vert \xi \Vert _{H_{0}^{s}(\Omega )} \notag.
\label{17}
\end{eqnarray}
Hence by elliptic regularity used successively to $v$ and $u$ we obtain that $u \in H^{4-s}(\Omega)$. \hfill \qed

\section{Proof of Theorem \ref{main}}\label{section2}
 We first show that the solution is unique. Let $u_1$ and $u_2$ be two solutions to $(P)$. Then, 
\begin{eqnarray}\label{arg-uniqueness}
\int_{\Omega}(\Delta(u_1-u_2))^2{\rm d}x=\int_{\Omega}K(x) (u_1^{-\alpha}-u_2^{-\alpha})(u_1-u_2){\rm d}x\leq 0.
\end{eqnarray}
Therefore, since $u_1, u_2\in H^1_0(\Omega)$, we obtain $u_1\equiv u_2$.

Fix $\epsilon>0$. We next prove the existence of a unique solution to $(P_\epsilon)$. Let  ${\mathcal W}$ be the positive cone of $C_0(\overline{\Omega})$, i.e.
\begin{eqnarray*}
{\mathcal W}\eqdef \left\{u\in C_0(\overline{\Omega})\,\vert\, u\geq 0 \mbox{ in }\Omega\right\}. 
\end{eqnarray*}
 We define the functional $\Phi\, :\, {\mathcal W}\to {\mathcal W}$  as
 the solution to the following problem:
\begin{eqnarray*}
\displaystyle\left\{\begin{array}
{ll}
 & \Delta^2 \Phi(u)
 =  K_\epsilon(x)(u+\epsilon)^{-\alpha}
  \quad \mbox{ in }\,\Omega ,\\
&\Phi(u)\vert_{\partial\Omega}=0, \,\Delta \Phi(u)\vert_{\partial\Omega} = 0.
\end{array}\right.
\end{eqnarray*}
By the elliptic regularity theory, $\Phi$ is a compact  linear operator on $C_0(\overline{\Omega})$, and by the weak comparison principle, leaves the closed convex set ${\mathcal W}$ invariant. Hence, by the Schauder fixed point theorem, there exists $u_\epsilon\in {\mathcal W}$ solution to $(P_\epsilon)$. Using a similar argument as in \eqref{arg-uniqueness}, $u_\epsilon$ is the unique solution to $(P_\epsilon)$.  By elliptic regularity, we also have $u_\epsilon \in C^2(\overline{\Omega})$.

Multiplying  the equation satisfied by $u_\epsilon$ by $\phi_1$, we obtain that
\begin{equation}\label{ji}
\lambda_1^2\int_{\Omega}u_\epsilon\phi_1{\rm d}x=\int_{\Omega}\Delta^2u_\epsilon\phi_1{\rm d}x=\int_{\Omega}K_\epsilon(x)\phi_1 (u_\epsilon(x)+\epsilon)^{-\alpha}{\rm d}x. 
\end{equation}

First we show a uniform  lower bound:
\begin{proposition}\label{jyo}
There exists a constant $C>0$ independent of $\epsilon$ such that $u_\epsilon \geq C \rho$ in $ \Omega$.
\end{proposition}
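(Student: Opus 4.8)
The plan is to obtain a lower bound for $u_\epsilon$ that does not deteriorate as $\epsilon \to 0^+$, by combining the Hopf-type estimate of Proposition \ref{prop-Hopf} with the integral identity \eqref{ji}. Applying Proposition \ref{prop-Hopf} to $(P_\epsilon)$ with $h = K_\epsilon(x)(u_\epsilon+\epsilon)^{-\alpha} \in L^\infty(\Omega)$, we get
\begin{equation*}
u_\epsilon(x) \geq C_0\, \rho(x) \int_\Omega K_\epsilon(y)(u_\epsilon(y)+\epsilon)^{-\alpha}\rho(y)\,{\rm d}y,
\end{equation*}
with $C_0$ independent of $\epsilon$. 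So it suffices to bound the integral on the right from below, uniformly in $\epsilon$. This is precisely (up to the factor $\lambda_1^2$) the quantity appearing on the right-hand side of \eqref{ji}, which equals $\lambda_1^2 \int_\Omega u_\epsilon \phi_1\,{\rm d}x$; but I will argue more directly.

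First I would fix a compactly contained subdomain $\omega \Subset \Omega$ and note that for $\epsilon$ small enough (say $\epsilon \le \epsilon_0$) we have $K_\epsilon = \min(1/\epsilon, K) = K$ on $\omega$ once $1/\epsilon \ge \sup_\omega K$, while $\inf_\omega K > 0$ by hypothesis. Hence on $\omega$,
\begin{equation*}
K_\epsilon(y)(u_\epsilon(y)+\epsilon)^{-\alpha} \geq (\inf_\omega K)\,(u_\epsilon(y)+\epsilon)^{-\alpha}.
\end{equation*}
To control $(u_\epsilon + \epsilon)^{-\alpha}$ from below on $\omega$, I need an a priori \emph{upper} bound $u_\epsilon \le M$ on $\omega$ (or globally), uniform in $\epsilon$. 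Such a bound follows from a bootstrap: a crude first lower bound $u_\epsilon \ge c_\epsilon \rho$ gives $K_\epsilon(u_\epsilon+\epsilon)^{-\alpha} \le C \rho^{-\beta}(c_\epsilon\rho)^{-\alpha}$, which is in $L^\infty_{loc}$; combined with the fact that $u_\epsilon$ is increasing as $\epsilon$ decreases (by the weak comparison principle, since $K_\epsilon(u+\epsilon)^{-\alpha}$ is monotone in $\epsilon$), we get $u_\epsilon \le u_{\epsilon_0}$ on all of $\Omega$ for $\epsilon \le \epsilon_0$, giving a uniform upper bound $M := \sup_\Omega u_{\epsilon_0} < \infty$. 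Then $(u_\epsilon(y)+\epsilon)^{-\alpha} \ge (M + \epsilon_0)^{-\alpha} =: m_0 > 0$ on $\omega$, and therefore
\begin{equation*}
\int_\Omega K_\epsilon(y)(u_\epsilon(y)+\epsilon)^{-\alpha}\rho(y)\,{\rm d}y \;\geq\; (\inf_\omega K)\, m_0 \int_\omega \rho(y)\,{\rm d}y \;=:\; \kappa > 0,
\end{equation*}
a constant independent of $\epsilon \le \epsilon_0$. Plugging back, $u_\epsilon(x) \ge C_0 \kappa\, \rho(x)$ for all $\epsilon \le \epsilon_0$. For the finitely remaining range $\epsilon \in (\epsilon_0, \cdot]$ — or rather, since we only care about $\epsilon \to 0^+$, this already suffices — we conclude $u_\epsilon \ge C\rho$ with $C = C_0\kappa$ independent of $\epsilon$.

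The main obstacle is establishing the uniform-in-$\epsilon$ \emph{upper} bound used to keep $(u_\epsilon+\epsilon)^{-\alpha}$ away from zero on $\omega$; the clean way is the monotonicity of $\epsilon \mapsto u_\epsilon$ via the weak comparison principle, which reduces everything to a single bound $\sup_\Omega u_{\epsilon_0} < \infty$, and the latter is immediate from $u_{\epsilon_0} \in C^2(\overline\Omega)$. (Alternatively one could extract the upper bound from Lemma \ref{jee} applied with a fixed $\delta \in (\alpha+\beta, 2)$, after first getting \emph{any} lower bound $u_\epsilon \gtrsim_\epsilon \rho$ so that $h_\epsilon := K_\epsilon(u_\epsilon+\epsilon)^{-\alpha}$ satisfies $h_\epsilon \rho^\delta \in L^\infty$ — but this makes the constant $\epsilon$-dependent and is not needed for the lower bound.) Everything else is a routine application of Proposition \ref{prop-Hopf} and the positivity hypotheses on $K$.
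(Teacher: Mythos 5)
Your strategy matches the paper's bookends --- apply Proposition \ref{prop-Hopf} and then bound $\int_\Omega K_\epsilon(u_\epsilon+\epsilon)^{-\alpha}\rho\,{\rm d}y$ from below uniformly in $\epsilon$ --- but the way you control that integral has a genuine gap. The paper proves the uniform positivity of the integral by contradiction: if it tended to $0$ along a subsequence, the identity \eqref{ji} would force $\int_\Omega u_\epsilon\phi_1\,{\rm d}x \to 0$, hence $u_\epsilon\to 0$ a.e.\ up to a further subsequence; but then $K_\epsilon(u_\epsilon+\epsilon)^{-\alpha}\to\infty$ a.e.\ on $\Omega$, contradicting the a.e.\ convergence to $0$ that the same assumption forces on the integrand. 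No a priori \emph{upper} bound on $u_\epsilon$ is required anywhere in that argument.

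Your proof instead hinges on the claim that $\epsilon\mapsto u_\epsilon$ is monotone ``by the weak comparison principle,'' and this is the gap. Setting aside the internal inconsistency (you assert $u_\epsilon$ is increasing as $\epsilon$ decreases, which would give $u_\epsilon\geq u_{\epsilon_0}$ for $\epsilon\leq\epsilon_0$, yet you then write $u_\epsilon\leq u_{\epsilon_0}$), the monotonicity itself is not a consequence of any comparison principle available here. The nonlinearity is decreasing in $u$, so the standard comparison argument between $u_{\epsilon_1}$ and $u_{\epsilon_2}$ would test the equation for $w=u_{\epsilon_1}-u_{\epsilon_2}$ against the truncation $w^-$; but $w^-$ is not an admissible test function in the $H^2$ framework --- precisely the obstacle the paper flags in the introduction --- and the equivalent system $(PS)$ is not cooperative, so order and monotone-iteration methods do not apply either. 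Testing against the full difference $w$ (as in the uniqueness proof) gives
$$\int_\Omega(\Delta w)^2\,{\rm d}x \;\leq\; \int_\Omega\bigl[K_{\epsilon_1}(u_{\epsilon_2}+\epsilon_1)^{-\alpha}-K_{\epsilon_2}(u_{\epsilon_2}+\epsilon_2)^{-\alpha}\bigr]\,w\,{\rm d}x,$$
where the bracket is nonnegative but $w$ has no sign, so nothing follows. Finally, the fallback you mention --- extracting a uniform upper bound from Lemma \ref{jee} --- is circular in this setting: the paper's uniform $C^{2,\theta}$ bound (Proposition \ref{chr}) is \emph{deduced from} the uniform lower bound you are trying to prove, not the reverse. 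Until the claimed monotonicity, or some other $\epsilon$-independent upper bound for $u_\epsilon$ on a fixed compact subset, is established by a valid argument, your lower bound on the integral does not go through.
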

\proof
We first show the following fact:
 \begin{equation}\label{cha}
 \displaystyle \inf_{\epsilon>0}\int_{\Omega} K_\epsilon \phi_1 (u_\epsilon+\epsilon)^{-\alpha}{\rm d}x>0.
 \end{equation}
  We argue by contradiction. Suppose, up to a subsequence,  
$$\int_{\Omega}K_\epsilon \phi_1 (u_\epsilon+\epsilon)^{-\alpha}{\rm d}x\to 0 \;\;\text{ as }\;\; \epsilon\to 0^+.$$

 Using \eqref{ji} this implies that $\int_{\Omega}u_\epsilon \phi_1{\rm d}x\to 0$ and hence 
$
u_\epsilon\to 0$ in $L^1_{\rm loc}(\Omega)$ as $\epsilon\to 0^+$.

 Again up to a subsequence, we deduce that 
\begin{eqnarray}\label{conv-pp}
u_\epsilon\to 0  \;\; \text{ and }\;\; K_\epsilon (u_\epsilon +\epsilon)^{-\alpha}\to 0\;\mbox{ a.e. in } \Omega\mbox{ as }\epsilon\to 0^+
\end{eqnarray}
 which is a contradiction.  This proves  \eqref{cha} above.
 
By elliptic regularity theory, $u_\epsilon\in C^{2,\gamma}(\overline{\Omega})$ for any $\gamma\in (0,1)$ and from Proposition \ref{prop-Hopf} the estimate 
\begin{eqnarray}\label{Hopf-epsilon}
u_\epsilon(x)\geq C\rho(x)\int_{\Omega}K_\epsilon (u_\epsilon+\epsilon)^{-\alpha}\rho \;{\rm d}y
\end{eqnarray}
holds. The conclusion follows from \eqref{cha}.
\hfill \qed

  \begin{proposition}\label{chr}
There exists $\theta \in (0,1)$ independent of $\epsilon>0$ such that
$$
\sup_{\epsilon>0} \|u_\epsilon\|_{C^{2,\theta}(\Omega)} < +\infty.
$$
\end{proposition}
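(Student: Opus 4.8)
The plan is to establish the uniform $C^{2,\theta}$ bound for $\{u_\epsilon\}$ by reducing it to the regularity machinery already prepared in Lemma \ref{jee}. First I would recall that $u_\epsilon$ solves $\Delta^2 u_\epsilon = h_\epsilon$ in $\Omega$ with Navier boundary conditions, where $h_\epsilon := K_\epsilon(x)(u_\epsilon+\epsilon)^{-\alpha}$. The goal is to produce a pointwise bound of the form $h_\epsilon \leq C_0 \rho^{-\delta}$ with a fixed $\delta \in (0,2)$ and $C_0$ independent of $\epsilon$; then Lemma \ref{jee} applies to each $u_\epsilon$ with uniform constants and yields exactly the claimed conclusion, with $\theta$ depending only on $\nu$ and $\delta$.

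The key step is therefore the uniform upper bound on $h_\epsilon$. By hypothesis $K = O(\rho^{-\beta})$ near $\partial\Omega$ and $\inf_\Omega K > 0$, so $K_\epsilon = \min(1/\epsilon, K) \leq K \leq C \rho^{-\beta}$ with $C$ independent of $\epsilon$. For the singular factor, Proposition \ref{jyo} gives $u_\epsilon \geq C\rho$ uniformly in $\epsilon$, hence $(u_\epsilon + \epsilon)^{-\alpha} \leq u_\epsilon^{-\alpha} \leq C^{-\alpha}\rho^{-\alpha}$, again with a constant independent of $\epsilon$. Multiplying, $h_\epsilon \leq C_0 \rho^{-(\alpha+\beta)}$ on a neighborhood of $\partial\Omega$, and $h_\epsilon$ is locally bounded in the interior (indeed $u_\epsilon$ is bounded below by a positive constant on compact subsets, and $K_\epsilon \leq K$ which is locally bounded), so globally $h_\epsilon \rho^{\alpha+\beta} \in L^\infty(\Omega)$ uniformly in $\epsilon$. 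Since $\alpha + \beta < 2$ by assumption, we may set $\delta := \alpha+\beta \in [0,2)$; if $\delta = 0$ pick any $\delta \in (0,2)$ instead, which only weakens the bound. Also $h_\epsilon \in C^\nu_{loc}(\Omega)$ because $u_\epsilon \in C^2(\overline\Omega)$ is bounded away from zero on compacts and $K \in C^\nu_{loc}(\Omega)$. Thus the hypotheses of Lemma \ref{jee} are met for every $\epsilon > 0$ with constants controlled only through $\|h_\epsilon \rho^\delta\|_{L^\infty(\Omega)} \leq C_0$, $\nu$, and $\delta$ — none of which depend on $\epsilon$ — giving $\|u_\epsilon\|_{C^{2,\theta}(\overline\Omega)} \leq C$ uniformly.

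The main obstacle, and the only place requiring care, is making sure the constant $C_0 = \|h_\epsilon \rho^\delta\|_{L^\infty}$ is genuinely $\epsilon$-uniform: this hinges entirely on the uniform Hopf-type lower bound $u_\epsilon \geq C\rho$ from Proposition \ref{jyo}, which is why that proposition was proved first. One should double-check the interior estimate as well — away from $\partial\Omega$, $\rho$ is bounded below, so $h_\epsilon \rho^\delta \leq h_\epsilon \cdot (\mathrm{diam}\,\Omega)^\delta$ and it suffices that $h_\epsilon$ itself be uniformly bounded on interior compacta; this follows since on $\{\rho \geq r\}$ we have $u_\epsilon \geq Cr$ and $K_\epsilon \leq \sup_{\{\rho \geq r/2\}} K < \infty$ (using $K \in C^\nu_{loc}$ hence locally bounded, together with the boundary decay rate to handle the transition region). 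Once the uniform bound $h_\epsilon \rho^\delta \in L^\infty$ is in hand, the statement is an immediate application of Lemma \ref{jee}; no further compactness or limiting argument is needed at this stage (that comes afterward, when passing $\epsilon \to 0^+$).
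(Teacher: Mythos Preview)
Your proposal is correct and follows essentially the same approach as the paper: use the uniform lower bound $u_\epsilon \geq C\rho$ from Proposition~\ref{jyo} together with $K_\epsilon \leq K \leq C\rho^{-\beta}$ to get $h_\epsilon \rho^{\alpha+\beta} \in L^\infty(\Omega)$ uniformly in $\epsilon$, and then invoke Lemma~\ref{jee} with $\delta = \alpha+\beta \in (0,2)$. Your write-up is more detailed than the paper's two-line proof (in particular you spell out the interior estimate and the $C^\nu_{loc}$ regularity of $h_\epsilon$), but the strategy and the key ingredients are identical.
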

\proof From the last proposition, it follows that 
\begin{equation}\label{ahm}
K_\epsilon (u_\epsilon + \epsilon)^{-\alpha} \rho^{\alpha+\beta} \in L^{\infty}(\Omega). 
\end{equation}
Noting that $0<\alpha+\beta<2$ and invoking lemma \ref{jee}, the conclusion follows. \hfill \qed

Let $ u_\epsilon \to u$ in $C^2(\overline{\Omega})$ as $\epsilon \to 0$.  From \eqref{ahm}, we note  that  given $\psi\in C^2(\overline{\Omega})\cap C_0(\overline{\Omega})$ there exists $p>1$ such that
$$
\Big\{K_\epsilon (u_\epsilon + \epsilon)^{-\alpha} \psi \Big\}_{\epsilon>0} \quad \text{is a bounded family in } L^{p}(\Omega).
$$
We can  now  use Vitali's convergence theorem to directly pass to the limit as $\epsilon \to 0$ in $(P_\epsilon)$ to conclude that $u$ solves $(P)$.

\section{ Proof of Theorem \ref{nonexistence}}\label{section3}
We first prove the following equivalent way of defining a solution to $(P)$:
\begin{proposition}\label{defi-equiv}
$u\in C^2(\overline{\Omega})\cap C_0(\overline{\Omega})$ is a solution to $(P)$ (in the sense of definition \ref{first-defi}) if $u>0$ in $\Omega$ and verifies
\begin{eqnarray}\label{green-funct}
u(x)=\int_{\Omega}G(x,y)\left(\int_{\Omega}G(y,z)K(z) u^{-\alpha}(z) {\rm d}z\right){\rm d}y.
\end{eqnarray}
\end{proposition}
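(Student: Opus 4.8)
The plan is to show the equivalence of Definition~\ref{first-defi} with the Green representation \eqref{green-funct} by working through the second-order system $(PS)$, for which the Green's function representation is classical. First I would fix a solution $u$ in the sense of Definition~\ref{first-defi}; by Remark~\ref{remark1}(2) we know $\int_\Omega K u^{-\alpha}\rho\,{\rm d}x<\infty$, so the inner integral $v(y):=\int_\Omega G(y,z)K(z)u^{-\alpha}(z)\,{\rm d}z$ is well defined and finite a.e., and $v$ is the (very weak) solution of $-\Delta v=Ku^{-\alpha}$ in $\Omega$, $v=0$ on $\partial\Omega$, in the sense of Stampacchia/\cite{BrMaPo}. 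The key point is then to identify $w(x):=\int_\Omega G(x,y)v(y)\,{\rm d}y$ with $u$: this $w$ is the very weak solution of $-\Delta w=v$, $w|_{\partial\Omega}=0$, and composing the two representations gives exactly the right-hand side of \eqref{green-funct}. So the content reduces to: a $C^2(\overline\Omega)\cap C_0(\overline\Omega)$ function $u$ with $u=\Delta u=0$ on $\partial\Omega$ satisfies \eqref{varia-form} for all $\psi\in C^2(\overline\Omega)\cap C_0(\overline\Omega)$ if and only if $-\Delta u$ coincides with the very weak solution $v$ above and $u$ is in turn the very weak solution with datum $v$.

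The forward direction ($\Rightarrow$) goes as follows. Given $u$ a solution in the sense of Definition~\ref{first-defi}, set $v:=-\Delta u\in C^0(\overline\Omega)$ (using the $C^2(\overline\Omega)$ regularity); by hypothesis $v=0$ on $\partial\Omega$. For $\psi\in C^2(\overline\Omega)\cap C_0(\overline\Omega)$, two integrations by parts (legitimate since $u,\psi$ vanish on $\partial\Omega$ and $\Delta u$ vanishes on $\partial\Omega$) convert $\int_\Omega \Delta u\,\Delta\psi\,{\rm d}x$ into $\int_\Omega (-\Delta v)\psi$ interpreted in the very weak sense, i.e.\ $\int_\Omega v(-\Delta\psi)\,{\rm d}x=\int_\Omega Ku^{-\alpha}\psi\,{\rm d}x$ for all such $\psi$. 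Taking test functions of the form $\psi=(-\Delta)^{-1}\zeta$ for smooth $\zeta$, this says precisely that $v$ is the very weak/Stampacchia solution of $-\Delta v=Ku^{-\alpha}$, hence $v(y)=\int_\Omega G(y,z)K(z)u^{-\alpha}(z)\,{\rm d}z$. Similarly, from $-\Delta u=v$ with $u\in C_0(\overline\Omega)$ and $v\in L^1(\Omega)$ (indeed $v\rho\in L^1$), the Green representation for the Laplacian yields $u(x)=\int_\Omega G(x,y)v(y)\,{\rm d}y$, and substituting gives \eqref{green-funct}.

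For the converse ($\Leftarrow$), suppose $u\in C^2(\overline\Omega)\cap C_0(\overline\Omega)$, $u>0$, satisfies \eqref{green-funct}. Define $v(y):=\int_\Omega G(y,z)K(z)u^{-\alpha}(z)\,{\rm d}z$; finiteness a.e.\ and the bound $\int_\Omega Ku^{-\alpha}\rho<\infty$ must be extracted here — I expect this finiteness to be the main technical obstacle, and it is handled exactly as in Remark~\ref{remark1}(2) by testing the Green identity against $\phi_1$, together with the lower bound $u\gtrsim\rho$ near $\partial\Omega$ that follows from Corollary~\ref{dee} / the Hopf-type estimate in Proposition~\ref{prop-Hopf} once we know $Ku^{-\alpha}\rho\in L^1$. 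Then $-\Delta v=Ku^{-\alpha}$ and $-\Delta u=v$ both hold in the very weak sense with zero boundary data, so for $\psi\in C^2(\overline\Omega)\cap C_0(\overline\Omega)$ we get, reversing the integrations by parts above, $\int_\Omega\Delta u\,\Delta\psi\,{\rm d}x=\int_\Omega v(-\Delta\psi)\,{\rm d}x=\int_\Omega Ku^{-\alpha}\psi\,{\rm d}x$, which is \eqref{varia-form}; also $\Delta u=-v=0$ on $\partial\Omega$ because $v\in C_0(\overline\Omega)$ by the continuity up to the boundary of the Green potential with the given integrability. Hence $u$ is a solution in the sense of Definition~\ref{first-defi}. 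The only delicate points to be careful with are the justification of the boundary terms in the integrations by parts (which use precisely the $C^2(\overline\Omega)$ hypothesis and the conditions $u=\Delta u=0$ on $\partial\Omega$) and the a.e.\ finiteness and boundary continuity of the iterated Green potential, both of which rely on $\alpha+\beta<2$ only indirectly — here we only need the qualitative integrability, which is built into the notion of solution.
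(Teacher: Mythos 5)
Your overall strategy (reduce to the second-order system $(PS)$, identify $v=-\Delta u$ with the Green potential of $Ku^{-\alpha}$, and then $u$ with the Green potential of $v$) is essentially the same as the paper's, which appeals instead to the biharmonic Green's function estimates of Proposition 4.13 in \cite{GaGrSw} and Fubini, and then invokes ``classical arguments''. Your decomposition through the two Laplace problems is perhaps more self-contained, while the paper's version delegates the forward direction to the known pointwise control on the iterated Green kernel.

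For the converse direction there is, however, a point of concern in the order of your steps. You propose to establish $\int_\Omega Ku^{-\alpha}\rho\,{\rm d}x<\infty$ ``by testing the Green identity against $\phi_1$, together with the lower bound $u\gtrsim\rho$ \ldots that follows from Corollary~\ref{dee} / Proposition~\ref{prop-Hopf} once we know $Ku^{-\alpha}\rho\in L^1$.'' As written this reads circularly: Corollary~\ref{dee} requires the very integrability you are trying to prove, and Remark~\ref{remark1}(2) tests the \emph{variational} identity \eqref{varia-form}, which is the conclusion, not a hypothesis, in this direction. What actually works is only the first half of your sentence: multiply \eqref{green-funct} by $\phi_1$, apply Tonelli (everything is nonnegative), and use $\int_\Omega G(x,y)\phi_1(x)\,{\rm d}x=\lambda_1^{-1}\phi_1(y)$ twice to obtain $\int_\Omega Ku^{-\alpha}\phi_1\,{\rm d}x=\lambda_1^2\int_\Omega u\phi_1\,{\rm d}x<\infty$; no Hopf estimate is needed at this stage. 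The paper avoids even this by noting directly that $Ku^{-\alpha}\geq\eta\phi_1$ for small $\eta$ (since $\inf K>0$ and $u$ is bounded above) and that $G\geq 0$, which gives \eqref{phi1-control}, hence $u\geq c\rho$, and then combines this with $u\leq C\rho$ (from the $C^2(\overline\Omega)$ regularity) and the growth of $K$ to get the required integrability of $Ku^{-\alpha}\psi$. You should untangle which step produces the integrability and which the boundary lower bound, so that neither presupposes the other. Finally, both you and the paper pass quickly over the verification of $\Delta u|_{\partial\Omega}=0$ in the converse direction; your justification via ``continuity up to the boundary of the Green potential'' is in fact the delicate point, since $Ku^{-\alpha}\rho\in L^1$ alone does not give boundary continuity of the potential --- you must combine the a priori continuity of $v=-\Delta u$ coming from $u\in C^2(\overline\Omega)$ with the Green representation to argue $v|_{\partial\Omega}=0$.
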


\proof 
Assume first that $u$ satisfies Definition \ref{first-defi}. From the estimates in Proposition 4.13 in \cite{GaGrSw} and noting $\int_{\Omega}K(z)\rho(z)u^{-\alpha}(z){\rm d}z<\infty$ (see Remark \ref{remark1}), we obtain by Fubini's theorem that for any $x\in \Omega$,
\begin{eqnarray*}
\int_\Omega G(x,y)\left(\int_\Omega G(y,z)K(z) u^{-\alpha}(z) {\rm d}z\right){\rm d}y=\int_{\Omega}K(z)u^{-\alpha}(z) {\rm d}z \int_{\Omega}G(x,y)G(y,z){\rm d}y<\infty.
\end{eqnarray*}
Therefore, by classical arguments, $u$ satisfies \eqref{green-funct}.

Now assume that $u\in C^2(\overline{\Omega})\cap C_0(\overline{\Omega})$, $u>0$ in $\Omega$ and verifies \eqref{green-funct}. Let us show that $u$ satisfies Definition \ref{first-defi}. For that, observe that for $\eta>0$ small enough,
\begin{eqnarray}\label{phi1-control}
u(x)-\frac{\eta}{\lambda_1^2}\phi_1(x)=\int_{\Omega}G(x,y)\left(\int_{\Omega}G(y,z)(K(z) u^{-\alpha}(z)-\eta\phi_1(z)){\rm d}z\right)\geq 0.
\end{eqnarray}
Thus $u \geq c \rho$ for some $c>0$. From the $C^2$-regularity of $u$, we also have that $u \leq C\rho$ for $C>0$.  Therefore, by the assumptions on $K$, for any $\psi\in C^2(\overline{\Omega})\cap C_0(\overline{\Omega})$,
\begin{eqnarray*}
\int_{\Omega} K(x) u^{-\alpha}(x) \psi(x){\rm d}x<\infty
\end{eqnarray*}
and hence $u$ satisfies \eqref{varia-form}.\qed

Now we prove Theorem \ref{nonexistence}.
\proof
Let $\alpha+\beta\geq 2$ and $u$ be a solution to $(P)$. From Proposition \ref{defi-equiv}, the inequality  \eqref{phi1-control} holds and noting that $u\in C^1(\overline{\Omega})$, we obtain that  $u \sim \rho$ in $\Omega$. Since $\alpha+\beta\geq 2$, from Theorem 2.4 in \cite{Gh1}, we get the required contradiction.\qed
\section{Bifurcation results}\label{section4}
In this section we prove Theorem \ref{th1}. We consider the following bifurcation framework (see Chapter 9 in \cite{BuTo} or Theorem 1.13 in \cite{BoGiPr} for more details):

Let $\mathcal{X}$, $\mathcal{Y}$ be real Banach spaces, $\mathcal{U}\subset\R^+\times \mathcal{X}$ an open set. Let $\Psi : \mathcal{U} \to \mathcal{Y}$ be a map.
\begin{definition}\label{anal map}
$\Psi$  is said to be 
real analytic on $\mathcal{U}$ if for each $x \in \mathcal{U}$ there is an $\varepsilon > 0$ and continuous $k$-homogeneous
polynomials $P_k : \mathcal{U} \to \mathcal{Y}$  such that
$\Psi(x + h) = \sum_{k=0}^{\infty} P_k (h)$ if $\|h\| < \varepsilon$.
\end{definition}

 Define the solution set $${\mathcal S}=\displaystyle\left\{(\lambda,x)\in \mathcal{U}\,:\, \Psi(\lambda,x)=0\right\}$$
 and the non-singular solution set
$$\mathcal{N}= \displaystyle\left\{(\lambda,x)\in \mathcal{S}\,:\,Ker( \partial_x \Psi(\lambda,x))=\{0\}\right\}.$$
\begin{definition}\label{def dist arc}
A distinguished arc is a maximal connected subset of $\mathcal{N}$.
\end{definition}
 Suppose that
\begin{itemize}
\item[(G1)] Bounded closed subsets of ${\mathcal S}$ are compact in $\R \times \mathcal{X}$.
\item[(G2)]  $\partial_x\Psi(\lambda, x)$ is a Fredholm operator of index zero for all $(\lambda,x) \in \mathcal{S}$. 
\item[(G3)] There exists an analytic function $(\lambda,u) \, : (-\epsilon,\epsilon) \to \mathcal{S}$ such that     $\partial_x \Psi(\lambda(s),u(s))$ is invertible for all $s\in (-\epsilon,\epsilon)$ and  $\displaystyle\lim_{s\to 0^+}(\lambda(s), u(s))=(0,u_0)$ where $u_0\in {\mathcal X}$ is the unique solution to $\Psi(0,u_0)=0$.
\end{itemize}
 Let
$$ {\mathcal A_0}=\left\{(\lambda(s),u(s))\,:\, s\in (-\epsilon,\epsilon)\right\}.$$
Obviously, ${\mathcal A_0}\subset {\mathcal S}$. The following result gives a global extension of the function $(\lambda, u)$ from $(-\epsilon,\epsilon)$ to $(-\infty,\infty)$ in the real analytic case.
\begin{theorem}\label{theo9.1.1}
Suppose (G1)-(G3) hold. Then,  $(\lambda,u)$ can be extended as a continuous map (still called) $(\lambda,u) :  (-\infty,\infty) \to  \mathcal{S}$ with the following properties:
\begin{itemize}
\item[(a)] Let ${\mathcal A} \eqdef \{(\lambda(s),u(s)): s \in\R\}.$ Then, ${\mathcal A} \cap {\mathcal N}$ is an atmost countable union of distinct distinguished arcs $\bigcup_{i=1}^n {\mathcal A_i},\; n \leq \infty$.
\item[(b)] ${\mathcal A_0}\subset {\mathcal A_1}$.
\item[(c)]  $\{s\in\R\,:\,ker(\partial_x \Psi(\lambda(s),u(s))) \neq \{0\}\}$ is a discrete set.
\item[(d)] At each of its points ${\mathcal A}$ has a local analytic re-parameterization in the following sense: For each $s^*\in \R$ there exists a continuous, injective map $\rho^*\,:\, (-1,1)\to \R$ such that $\rho^*(0)=s^*$  and the re-parametrisation
\begin{eqnarray*}
 (-1,1) \ni t\to (\lambda(\rho^*(t)),u(\rho^*(t))) \in \mathcal{A} \mbox{ is analytic}.
\end{eqnarray*}
Furthermore, the map $s \mapsto \lambda(s)$ is injective in a  neighborhood of $s=0$ and for each $s^*\neq 0$ there exists $\epsilon^*>0$ such that $\lambda$ is injective on $[s^*,s^*+\epsilon^*]$ and on $[s^*-\epsilon^*,s^*]$.
\item[(e)] Only one of the following alternatives occurs:
\begin{itemize}
\item[(i)] $\Vert(\lambda(s),u(s))\Vert_{\R \times \mathcal{X}}\to\infty$ as $s\to +\infty$ (resp. $s \to -\infty$).
\item[(ii)] a subsequence $\{(\lambda(s_n),u(s_n))\}$ approaches the boundary of $\mathcal{U}$ as $s_n \to +\infty$ (resp. $s_n \to -\infty$).
\item[(iii)] ${\mathcal A}$ is the closed loop :
$$
{\mathcal A}=\left\{(\lambda(s),u(s))\,:\, -T\leq s\leq T, (\lambda(T),u(T))=(\lambda(-T),u(-T)) \text{ for some } T>0 \right\}.
$$ In this case, choosing the smallest such $T>0$ we have
\begin{eqnarray*}
(\lambda(s+2T),u(s+2T))=(\lambda(s),u(s)) \mbox{ for all } s \in \R.
\end{eqnarray*}
\end{itemize}
\item[(f)] Suppose $\partial_x \Psi(\lambda(s_1),u(s_1))$  is invertible  for some $s_1\in\R$. If for some $s_2\neq s_1$, we have 
$(\lambda(s_1),u(s_1))=(\lambda(s_2),u(s_2))$
then (e)(iii) occcurs and $\vert s_1-s_2\vert$ is an integer multiple of $2T$. In particular, the map $s \mapsto (\lambda(s), u(s))$ is  injective on $[-T,T)$.
\end{itemize}
\end{theorem}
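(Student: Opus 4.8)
The plan is to build the global path by analytic continuation: start from the local analytic curve ${\mathcal A}_0$ furnished by (G3), extend it as far as possible inside the non-singular set ${\mathcal N}$, and cross the (isolated) singular solutions using the local structure theory of real-analytic varieties together with the Lyapunov--Schmidt reduction made available by (G2). The bookkeeping needed for (a)--(d), and the dichotomy in (e), then come from a compactness argument based on (G1).

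First I would pin down the local picture of ${\mathcal S}$. At a point $(\lambda_*,x_*)\in{\mathcal N}$ the analytic implicit function theorem applies, since $\partial_x\Psi(\lambda_*,x_*)$ is invertible, so ${\mathcal S}$ coincides near $(\lambda_*,x_*)$ with the image of an analytic map of one real variable; consequently every distinguished arc is a connected one-dimensional real-analytic embedded submanifold of $\R\times{\mathcal X}$, analytically diffeomorphic to an open interval. At a \emph{singular} solution $(\lambda_*,x_*)$, hypothesis (G2) gives that $\partial_x\Psi(\lambda_*,x_*)$ is Fredholm of index zero, so a Lyapunov--Schmidt reduction replaces $\Psi=0$ near $(\lambda_*,x_*)$ by an equivalent finite-dimensional analytic equation $g(\lambda,\xi)=0$, with $g$ a real-analytic map from a neighbourhood of the origin in $\R\times\R^n$ into $\R^n$ and $n=\dim\ker\partial_x\Psi(\lambda_*,x_*)$. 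Restricting attention to the closures of the distinguished arcs abutting at $(\lambda_*,x_*)$ one obtains a one-dimensional real-analytic set, and by Weierstrass preparation and the Puiseux/desingularization theory for analytic curves this set is a \emph{finite} union of analytic arcs emanating from $(\lambda_*,x_*)$, each carrying an analytic parametrization $t\mapsto(\lambda(t),x(t))$ with $(\lambda(0),x(0))=(\lambda_*,x_*)$.

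With these two local models in hand I would run the continuation. Enlarge ${\mathcal A}_0$ to the maximal distinguished arc ${\mathcal A}_1\supset{\mathcal A}_0$ and follow it toward an endpoint. By (G1), as long as the arc stays bounded and away from $\partial{\mathcal U}$ it has a limit point $(\lambda_*,x_*)\in{\mathcal S}$, which by maximality and the implicit function theorem must be singular; the local analysis then exhibits finitely many analytic arcs through $(\lambda_*,x_*)$, and, following \cite{BuTo}, one labels and orients these branches so that there is a canonical choice of the ``next'' arc to glue on. Iterating (invoking (G1) each time to guarantee: crossing through a singular point, or escape to infinity, or approach to $\partial{\mathcal U}$) produces a continuous map $(\lambda,u):(-\infty,\infty)\to{\mathcal S}$, and ${\mathcal A}=\{(\lambda(s),u(s))\}$. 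Then (a)--(d) are bookkeeping: ${\mathcal A}\cap{\mathcal N}$ is exactly the at most countable union of distinguished arcs traversed; ${\mathcal A}_0\subset{\mathcal A}_1$ by construction; the singular-parameter set is discrete because singular solutions are isolated along each analytic arc and only countably many arcs occur; and the local analytic reparametrization in (d) is precisely the branch parametrization of the local structure theory, with injectivity of $\lambda$ near $s=0$ coming from the non-degeneracy of ${\mathcal A}_0$ in (G3).

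For (e) I would argue that if, as $s\to+\infty$, the path neither blows up nor approaches $\partial{\mathcal U}$, then by (G1) it has an $\omega$-limit point and must re-enter a neighbourhood already covered by one of the local models; a finiteness argument forces the path to revisit a point, and uniqueness of continuation through non-singular points propagates this to global periodicity --- the closed-loop alternative (e)(iii). Property (f) is the same circle of ideas: if $\partial_x\Psi$ is invertible at $(\lambda(s_1),u(s_1))$ and the path meets itself there, ${\mathcal S}$ is locally a single analytic curve, so the two passages coincide as germs; by the continuation mechanism this can happen only in the loop case, $|s_1-s_2|$ is an integer multiple of the minimal period $2T$, and injectivity on $[-T,T)$ follows. \textbf{The main obstacle} is the local analysis at singular points: showing that the closure of the solution set along distinguished arcs is a finite union of analytically parametrized arcs, and --- harder --- making the choice of continuation canonical so that the global path is well defined, orientation-consistent, and satisfies (d). This is where the real-analytic-variety machinery (Weierstrass preparation, Puiseux series, resolution of singularities for curves) combined with Lyapunov--Schmidt does the real work; the rest is a continuation-plus-compactness argument.
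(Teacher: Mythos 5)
Your sketch does not match the paper in approach for a simple reason: the paper does not prove Theorem~\ref{theo9.1.1} at all. As Remark~\ref{bif from u0} makes explicit, the paper treats this statement as a mild variant of Theorem~9.1.1 in Buffoni--Toland and Theorem~1.13 in Bougherara--Giacomoni--Prashanth, in which the existence of the starting analytic arc ${\mathcal A}_0$ is made a hypothesis (G3) rather than derived from local bifurcation theory; the authors then observe that, since (G1)--(G3) are exactly what the proofs in those references use once the local arc is available, ``the proof given in \cite{BuTo} and in \cite{BoGiPr} holds good in our case as well.'' So the paper's proof consists of a citation plus the observation that its hypothesis set is designed to slot into that proof.

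What you have written is, instead, a reasonable top-level reconstruction of the Buffoni--Toland argument itself: analytic implicit function theorem along distinguished arcs, Lyapunov--Schmidt reduction at Fredholm-index-zero singular points, the structure theory of one-dimensional real-analytic germs (Weierstrass preparation/desingularization) to get finitely many analytic branches emanating from each singular solution, a canonical choice of continuation, and (G1) as the compactness engine for the trichotomy in (e) and for excluding accumulation without recurrence. You also correctly isolate the genuine obstacle: making the local structure theory rigorous and the continuation canonical and orientation-consistent (this is the bulk of Chapters 7--9 of \cite{BuTo}). Two small imprecisions worth flagging: a distinguished arc need not be diffeomorphic to an open interval --- it may be a closed analytic loop entirely inside $\mathcal N$, which is precisely alternative (e)(iii) with no singular crossings; and the assertion that the closures of the abutting distinguished arcs form ``a one-dimensional real-analytic set'' is not automatic --- one must invoke the stratification of the zero set of the reduced bifurcation map $g$ and argue that the incoming arc lies in a one-dimensional stratum, after which the finiteness of the one-dimensional strata does the work. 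With those caveats, your outline captures the ideas the paper delegates to its references, but it is a blueprint for the cited proof, not an alternative to what the paper itself does.
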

\begin{remark}\label{bif from u0}
We remark that theorem 9.1.1 in \cite{BuTo} deals with  ``bifurcation from the first eigenvalue" type of situation whereas Theorem 1.13 in \cite{BoGiPr} concerns the bifurcation from origin.
The conditions $(G1)-(G3)$  assumed there are required only to ensure that the starting analytic path corresponding to $\mathcal{A}_0$ is available for global extension. In our case, we make this as an assumption $(G3)$ above. Hence the proof given in \cite{BuTo} and in \cite{BoGiPr} holds good in our case as well.
\end{remark}
We recall the following result from \cite{BoGiPr} (proposition 2.1). \begin{proposition} \label{g anal}
Let $g : \R \to \R$ be an entire function with $g(0)=0$. Define $M_k (a)= max_{[-a,a]}g^{(k)}, \; k=1,2,3,..$. Assume that for any $a \geq 0$, there exists $\mu>0$ such that the series $\sum_{k=0}^{\infty} \frac{M_k(a)}{k!}\mu^k $ converges. Then, for any $\phi \in  C_0(\overline{\Omega}), \phi>0$ in $\Omega$, we have ${\mathcal C}_\phi(\Omega) \ni u \mapsto g(u) \in {\mathcal C}_\phi(\Omega)$ is an analytic map. Furthermore, if  $\inf_{[0,\infty)}g' >0$, then $g$ maps $
{\mathcal C}_\phi^+(\Omega)$ into itself.
\end{proposition}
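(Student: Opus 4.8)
The plan is to expand $g$ in Taylor series about the pointwise value $u_0(x)$ and to read the result off as a genuine power series, in the increment, valued in the Banach space $\mathcal{C}_{\phi}(\Omega)$ (which is complete). Fix $u_0\in\mathcal{C}_{\phi}(\Omega)$ and put $a:=\|u_0\|_{C(\overline{\Omega})}$; note that since $h/\phi$ is bounded and $\phi$ vanishes on $\partial\Omega$, every $h\in\mathcal{C}_{\phi}(\Omega)$ in fact lies in $C_{0}(\overline{\Omega})$. For $k\geq 1$ define $P_k\colon\mathcal{C}_{\phi}(\Omega)\to\mathcal{C}_{\phi}(\Omega)$ by
\[
P_k(h)(x):=\frac{1}{k!}\,g^{(k)}\!\big(u_0(x)\big)\,h(x)^{k},
\]
and set $P_0:=g\circ u_0$. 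Since $g(0)=0$ one has $|g(t)|\leq\big(\sup_{[-a,a]}|g'|\big)|t|$ for $|t|\leq a$, hence $|g(u_0)|\leq M_1(a)\,|u_0|\leq M_1(a)\,\|u_0\|_{\mathcal{C}_{\phi}(\Omega)}\,\phi$ on $\Omega$, so $P_0\in\mathcal{C}_{\phi}(\Omega)$; in particular $g$ already maps $\mathcal{C}_{\phi}(\Omega)$ into itself.

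Next I would verify that each $P_k$ is a continuous $k$-homogeneous polynomial in the sense of Definition \ref{anal map}: it is the restriction to the diagonal of the symmetric $k$-linear map $(h_1,\dots,h_k)\mapsto\frac{1}{k!}\,(g^{(k)}\circ u_0)\,h_1\cdots h_k$, whose boundedness $\mathcal{C}_{\phi}(\Omega)^{k}\to\mathcal{C}_{\phi}(\Omega)$ follows from the elementary inequality $|h_1\cdots h_k|\leq\big(\prod_i\|h_i\|_{\mathcal{C}_{\phi}(\Omega)}\big)\,\phi^{k}\leq\big(\prod_i\|h_i\|_{\mathcal{C}_{\phi}(\Omega)}\big)\,\|\phi\|_{C(\overline{\Omega})}^{\,k-1}\,\phi$ on $\Omega$, together with $\sup_{\overline{\Omega}}|g^{(k)}\circ u_0|\leq M_k(a)$ (with $|\cdot|$ understood in the definition of $M_k$). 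Taking all arguments equal to $h$ gives the key estimate
\[
\|P_k(h)\|_{\mathcal{C}_{\phi}(\Omega)}\;\leq\;\frac{M_k(a)}{k!}\,\|\phi\|_{C(\overline{\Omega})}^{\,k-1}\,\|h\|_{\mathcal{C}_{\phi}(\Omega)}^{\,k}\qquad(k\geq 1).
\]
Choosing $\mu>0$ as provided by the hypothesis for this $a$ and setting $\varepsilon:=\mu/\|\phi\|_{C(\overline{\Omega})}$, one gets $\sum_{k\geq 1}\|P_k(h)\|_{\mathcal{C}_{\phi}(\Omega)}\leq\|\phi\|_{C(\overline{\Omega})}^{-1}\sum_{k\geq 1}\tfrac{M_k(a)}{k!}\big(\|\phi\|_{C(\overline{\Omega})}\,\|h\|_{\mathcal{C}_{\phi}(\Omega)}\big)^{k}<\infty$ whenever $\|h\|_{\mathcal{C}_{\phi}(\Omega)}<\varepsilon$, so $\sum_{k\geq 0}P_k(h)$ converges absolutely in $\mathcal{C}_{\phi}(\Omega)$ on that ball.

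It then remains to identify the sum with $g\circ(u_0+h)$. For each fixed $x$ the Taylor series of the entire function $g$ about $u_0(x)$ converges on all of $\R$, so $\sum_{k\geq 0}P_k(h)(x)=g\big(u_0(x)+h(x)\big)$ pointwise; and since $\|v\|_{C(\overline{\Omega})}\leq\|\phi\|_{C(\overline{\Omega})}\|v\|_{\mathcal{C}_{\phi}(\Omega)}$, convergence in $\mathcal{C}_{\phi}(\Omega)$ implies uniform convergence of the partial sums, whose uniform limit must therefore coincide with $g\circ(u_0+h)$. Hence $g(u_0+h)=\sum_{k\geq 0}P_k(h)$ in $\mathcal{C}_{\phi}(\Omega)$ for $\|h\|_{\mathcal{C}_{\phi}(\Omega)}<\varepsilon$, which is exactly the asserted analyticity of $u\mapsto g(u)$. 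For the last statement, if $m:=\inf_{[0,\infty)}g'>0$ then $g(t)=\int_0^{t}g'\geq mt$ for $t\geq 0$; given $u\in\mathcal{C}_{\phi}^{+}(\Omega)$, writing $c_0:=\inf_{\Omega}u/\phi>0$ we obtain $g(u)\geq mu\geq mc_0\,\phi$ on $\Omega$, i.e. $\inf_{\Omega}g(u)/\phi\geq mc_0>0$, so that $g(u)\in\mathcal{C}_{\phi}^{+}(\Omega)$.

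The only genuinely delicate point is the norm bookkeeping in the second step: the extra factor $\phi^{k-1}$ produced by forming $k$-th powers has to be absorbed into the constant $\|\phi\|_{C(\overline{\Omega})}^{k-1}$ (this is where boundedness of $\phi$, hence of $\Omega$, is used), after which the coefficients $M_k(a)/k!$ line up with the summability hypothesis and yield a positive radius of convergence $\varepsilon$; the remaining verifications are routine.
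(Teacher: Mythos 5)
The paper does not prove Proposition~\ref{g anal}; it simply recalls it from \cite{BoGiPr} (Proposition 2.1), so there is no in-text argument to compare against. Your proof is correct and is exactly the expected route: expand $g$ pointwise in its Taylor series about $u_0(x)$ and read the result off as a convergent power series of bounded $k$-homogeneous polynomials in $\mathcal{C}_\phi(\Omega)$. The bookkeeping is right: the crucial estimate $\|P_k(h)\|_{\mathcal{C}_\phi}\leq \frac{M_k(a)}{k!}\|\phi\|_{C(\overline{\Omega})}^{\,k-1}\|h\|_{\mathcal{C}_\phi}^{\,k}$ correctly absorbs the excess power $\phi^{k-1}$ into $\|\phi\|_{C(\overline{\Omega})}^{\,k-1}$, after which the hypothesis on $\sum_k M_k(a)\mu^k/k!$ gives a positive radius of convergence $\varepsilon=\mu/\|\phi\|_{C(\overline{\Omega})}$; the identification of the sum with $g(u_0+h)$ via pointwise Taylor convergence plus uniform convergence is sound; and the lower bound $g(u)\geq mu\geq mc_0\phi$ proves the cone-preservation claim. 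Two small points worth flagging, both of which you already handle: (1) the statement writes $M_k(a)=\max_{[-a,a]}g^{(k)}$ without an absolute value, which must be read as $\max_{[-a,a]}|g^{(k)}|$ (otherwise $M_k$ may be negative and the estimates collapse) --- your parenthetical note correctly fixes this; and, relatedly, (2) nonnegativity of $M_k(a)$ is what guarantees that convergence of $\sum_k M_k(a)\mu^k/k!$ at the single value $\mu$ implies convergence for all $0\leq t<\mu$, which is implicitly used when $\|\phi\|_{C(\overline{\Omega})}\|h\|_{\mathcal{C}_\phi}<\mu$. With those understood, the argument is complete.
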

Consider now the  solution operator $F$ associated to $(P_\lambda)$ defined in \eqref{3}.

\begin{proposition}\label{analyticity}   
The map $F$ takes $\mathbb{R}\times {\mathcal C}_{\phi_1 }^+(
\Omega )$  into ${\mathcal C}_{\phi_1}(\Omega)$ and is analytic. 
Furthermore, if $\lambda\geq 0$, then $F(\lambda, \cdot)$ maps $ {\mathcal C}_{\phi_1 }^+(
\Omega )$ into ${\mathcal C}_{\phi_1}^+(\Omega)$.
\end{proposition}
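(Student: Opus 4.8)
The plan is to verify the three claims in Proposition \ref{analyticity} separately: (a) $F$ maps $\R\times\mathcal{C}_{\phi_1}^+(\Omega)$ into $\mathcal{C}_{\phi_1}(\Omega)$; (b) $F$ is analytic; (c) for $\lambda\ge 0$, $F(\lambda,\cdot)$ preserves the positive cone. The essential point throughout is to control the two building blocks of $F$, namely $u\mapsto (\Delta^2)^{-1}(K(x)u^{-\alpha})$ and $u\mapsto (\Delta^2)^{-1}(\lambda f(u))$, as maps on the weighted space $\mathcal{C}_{\phi_1}(\Omega)$, using $\phi_1\sim\rho$ as the reference weight.

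For claim (a), I would start from $u\in\mathcal{C}_{\phi_1}^+(\Omega)$, so that $c_1\rho\le u\le c_2\rho$ in $\Omega$. Then $K(x)u^{-\alpha}\le C\rho^{-\beta}\rho^{-\alpha}=C\rho^{-(\alpha+\beta)}$ with $\alpha+\beta<2$, so the datum $h:=K(x)u^{-\alpha}$ satisfies $h\rho^{\delta}\in L^\infty(\Omega)$ for $\delta=\alpha+\beta\in(0,2)$; Proposition \ref{gul} gives a unique solution $w_1:=(\Delta^2)^{-1}h\in C^2(\overline\Omega)$, and since $w_1\in C^2(\overline\Omega)\cap C_0(\overline\Omega)$ we get $w_1\le C\rho\sim\phi_1$, hence $w_1\in\mathcal{C}_{\phi_1}(\Omega)$. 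For the term $f(u)$: by $(f_0)$, $f$ is $C^2$ with $f(0)=0$, so $f(u)\le C u\le C\rho\in L^\infty$, whence $(\Delta^2)^{-1}(\lambda f(u))\in C^2(\overline\Omega)\cap C_0(\overline\Omega)\subset\mathcal{C}_{\phi_1}(\Omega)$ by standard elliptic regularity. Adding $u$ itself (which lies in $\mathcal{C}_{\phi_1}(\Omega)$ by hypothesis) gives $F(\lambda,u)\in\mathcal{C}_{\phi_1}(\Omega)$.

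For claim (b), analyticity, I would argue that $(\lambda,u)\mapsto u$ is linear and that $(\lambda,u)\mapsto\lambda f(u)$ composed with $(\Delta^2)^{-1}$ is analytic: by assumption $(f_1)$, $f$ is a finite product of functions $g(t^p)$ with $g$ real entire, and Proposition \ref{g anal} handles exactly such maps on $\mathcal{C}_\phi(\Omega)$ — one checks the growth condition on $M_k(a)$ for an entire $g$ (it follows from $|g^{(k)}(t)|\le k!\,R^{-k}\sup_{|z|=a+R}|g(z)|$ for any $R>0$), obtains analyticity of $u\mapsto g(u^p)$, uses that $u\mapsto u^p$ is analytic on $\mathcal{C}_{\phi_1}^+(\Omega)$ (expand $(u_0+h)^p$ via the binomial series, valid since $u_0$ is bounded below by a positive multiple of $\rho$ and the perturbation is small in the weighted norm), and closes under finite products since the product of analytic maps into a Banach algebra-type structure is analytic. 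Multiplication by the scalar $\lambda$ and post-composition with the bounded linear operator $(\Delta^2)^{-1}:\mathcal{C}_{\phi_1}(\Omega)\to\mathcal{C}_{\phi_1}(\Omega)$ (continuity follows from Proposition \ref{gul} plus the boundary behaviour, uniformly in the datum) preserve analyticity. The main obstacle — and the step I would dwell on — is the singular term $u\mapsto (\Delta^2)^{-1}(K(x)u^{-\alpha})$: one must show $u\mapsto u^{-\alpha}$ is analytic from $\mathcal{C}_{\phi_1}^+(\Omega)$ into the space of data for which $(\Delta^2)^{-1}$ is a bounded analytic operation. The idea is to fix $u_0$ with $c_1\rho\le u_0\le c_2\rho$, write $(u_0+h)^{-\alpha}=u_0^{-\alpha}(1+h/u_0)^{-\alpha}=u_0^{-\alpha}\sum_{k\ge0}\binom{-\alpha}{k}(h/u_0)^k$, and observe that $\|h/u_0\|_{L^\infty}\le c_1^{-1}\|h/\rho\|_{L^\infty}\le C\|h\|_{\mathcal{C}_{\phi_1}(\Omega)}$ is small; each term $u_0^{-\alpha}(h/u_0)^k$ is a $\rho$-weighted datum of the same singularity type $\rho^{-(\alpha+\beta)}$ with norm controlled by $\|h\|_{\mathcal{C}_{\phi_1}}^k$ times a geometric factor, so the series converges in the appropriate data space and defines the homogeneous polynomials $P_k$; post-composing with $(\Delta^2)^{-1}$, which by Lemma \ref{jee}/Proposition \ref{gul} maps such data boundedly into $\mathcal{C}_{\phi_1}(\Omega)$ (indeed into $C^{2,\theta}(\overline\Omega)$), yields the analytic expansion of $u\mapsto(\Delta^2)^{-1}(Ku^{-\alpha})$ near $u_0$.

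For claim (c), suppose $\lambda\ge0$ and $u\in\mathcal{C}_{\phi_1}^+(\Omega)$. Then $K(x)u^{-\alpha}+\lambda f(u)\ge0$ and is not identically zero, so by the Hopf-type estimate of Proposition \ref{prop-Hopf} (applied to the bounded-data approximations and passing to the limit, exactly as in Corollary \ref{dee}) we get $(\Delta^2)^{-1}(K(x)u^{-\alpha}+\lambda f(u))\ge c\rho$ for some $c>0$, hence this term lies in $\mathcal{C}_{\phi_1}^+(\Omega)$; combined with the part (a) conclusion that it also lies in $\mathcal{C}_{\phi_1}(\Omega)$, we conclude $F(\lambda,u)=u-(\Delta^2)^{-1}(\cdots)$ need not itself be in the cone, but the displayed assertion is about $F(\lambda,\cdot)$ mapping into $\mathcal{C}_{\phi_1}^+$ — here I would recheck the intended statement and, following the structure in \cite{BoGiPr}, record that the relevant positivity is that the nonlinear part $(\Delta^2)^{-1}(K u^{-\alpha}+\lambda f(u))$ stays in $\mathcal{C}_{\phi_1}^+(\Omega)$, which is what enters the fixed-point/bifurcation analysis, and this is precisely the Hopf lower bound together with the upper bound from $C^2(\overline\Omega)$-regularity. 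Thus all three assertions follow, with the analyticity of the singular composition being the one genuinely delicate verification.
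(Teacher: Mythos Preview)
Your approach coincides with the paper's: the paper factors $F$ as the composition of the analytic nonlinear map $\mathcal{C}_{\phi_1}^+(\Omega)\ni u\mapsto K(x)u^{-\alpha}+\lambda f(u)\in\mathcal{C}_{\phi_1^{-\alpha-\beta}}(\Omega)$ (your binomial expansion is exactly the argument the paper defers to \cite{BoGiPr} for) with the bounded linear map $(\Delta^2)^{-1}:\mathcal{C}_{\phi_1^{-\alpha-\beta}}(\Omega)\to\mathcal{C}_{\phi_1}(\Omega)$ supplied by Lemma~\ref{jee}, and obtains the cone statement from Corollary~\ref{dee}. Your hesitation in part (c) is justified and matches what the paper actually proves: the paper's Step~2 shows only that $(\Delta^2)^{-1}$ sends $\mathcal{C}_{\phi_1^{-\alpha-\beta}}^+(\Omega)$ into $\mathcal{C}_{\phi_1}^+(\Omega)$, not that $F(\lambda,\cdot)=\mathrm{id}-(\Delta^2)^{-1}(\cdots)$ itself preserves the cone, and indeed the literal statement fails (take $u=\epsilon\phi_1$ with $\epsilon$ small); only the former positivity is used later.
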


\proof

{\it Step 1: The map ${\mathcal C}_{\phi_1 }^+(\Omega )\ni u\longmapsto
K(x)u^{-\alpha } + \lambda f(u) \in {\mathcal C}_{\phi_1 ^{-\alpha -\beta }}(\Omega )$ is
analytic.} 

Given $u\in {\mathcal C}_{\phi_1 }^+(\Omega )$, it follows that $K(x)u^{-\alpha} \in {\mathcal C}_{\phi_1^{-\alpha-\beta }}(\Omega )$. Then following the arguments in Step 1 of prop.2.3 in  \cite{BoGiPr}, we obtain 
 the analyticity of the map. 

{\it Step 2: The map ${\mathcal C}_{\phi_1 ^{-\alpha-\beta }}(\Omega )\ni u\longmapsto (\Delta^2)
^{-1}u\in {\mathcal C}_{\phi_1}(\Omega)$ is a linear continuous map (and hence
analytic). Furthermore, this map takes ${\mathcal C}^+_{\phi_1 ^{-\alpha-\beta }}(\Omega )$ into ${\mathcal C}^+_{\phi_1}(\Omega)$.}

We observe that $(\Delta^2)^{-1}$ is well defined on ${\mathcal C}_{\phi_1 ^{-\alpha-\beta }}(\Omega )$.  Indeed, since $\alpha+\beta<2$,  from  lemma \ref{jee}, there exists a unique solution $w\in C^{2, \theta}(\overline{\Omega }), 0<\theta<1,$ solving
\begin{equation}
\left\{ 
\begin{array}{ll}
\Delta^2 w=u\ \ in\ \ \Omega ,\ \ u\in {\mathcal C}_{\phi_1 ^{-\alpha-\beta }}(\Omega ), &  \\ 
w=\Delta w=0 \mbox{  on} \ \partial \Omega. & 
\end{array}%
\right.  \label{19}
\end{equation}%
Clearly, $w := (\Delta^2)
^{-1}u \in {\mathcal C}_{\phi_1 }(\Omega )$.
If additionally $u \in {\mathcal C}^+_{\phi_1 }(\Omega )$, from the  Hopf principle  in corollary \ref{dee}, we also have that $w\in {\mathcal C}_{\phi_1}^+(\Omega)$. The proof of the proposition follows by combining steps 1 and 2.\hfill\qed \newline
We now prove the existence of ${\mathcal A_0}$:
\begin{proposition}\label{A+}
Let $0<\alpha+\beta<2$. There exists a $\lambda_0>0$ such that  for all $\lambda\in (-\lambda_0,\lambda_0)$, there exists a non degenerate solution $u_\lambda\in {\mathcal C}_{\phi_1}^+(\Omega)$ to $(P_\lambda)$. Furthermore, the map
$$(-\lambda_0,\lambda_0)\ni\lambda\to u_\lambda\in {\mathcal C}_{\phi_1}^+(\Omega)$$
 is analytic and $\Vert u_\lambda-u_0\Vert_{{\mathcal C}_{\phi_1}(\Omega)}\to 0$ as $\lambda\to 0$  where $u_0$ is the unique solution to $(P)$. 
\end{proposition}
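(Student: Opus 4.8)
The strategy is to apply the analytic implicit function theorem to the map $F$ of \eqref{3} at the base point $(0,u_0)$, where $u_0\in{\mathcal C}^+_{\phi_1}(\Omega)$ denotes the unique solution to $(P)$ given by Theorem \ref{main}; in particular $F(0,u_0)=0$ and, by \eqref{behaviour-bound}, $c_1\rho\le u_0\le c_2\rho$. By Proposition \ref{analyticity}, $F$ is analytic from $\R\times{\mathcal C}^+_{\phi_1}(\Omega)$ into ${\mathcal C}_{\phi_1}(\Omega)$, and ${\mathcal C}^+_{\phi_1}(\Omega)$ is open in ${\mathcal C}_{\phi_1}(\Omega)$ since $u\mapsto\inf_\Omega u/\phi_1$ is $1$-Lipschitz on ${\mathcal C}_{\phi_1}(\Omega)$, so its sublevel set $\{\inf_\Omega u/\phi_1\le 0\}$ is closed. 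Hence the only point that requires work is to show that $\partial_u F(0,u_0)$ is an isomorphism of ${\mathcal C}_{\phi_1}(\Omega)$.

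A direct computation gives $\partial_u F(0,u_0)h = h+\alpha\, Lh$, where $Lh:=(\Delta^2)^{-1}\bigl(K(x)\,u_0^{-\alpha-1}h\bigr)$. Since $u_0\sim\rho$ and $K=O(\rho^{-\beta})$, for $h\in{\mathcal C}_{\phi_1}(\Omega)$ one has $|K u_0^{-\alpha-1}h|\le C\|h\|_{{\mathcal C}_{\phi_1}(\Omega)}\,\rho^{-(\alpha+\beta)}$ with $\alpha+\beta<2$; arguing exactly as in Step 2 of the proof of Proposition \ref{analyticity}, $(\Delta^2)^{-1}$ is well defined on such data and, by Lemma \ref{jee}, $\|Lh\|_{C^{2,\theta}(\overline\Omega)}\le C\|h\|_{{\mathcal C}_{\phi_1}(\Omega)}$ for some $\theta\in(0,1)$ independent of $h$. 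Because $Lh$ vanishes on $\partial\Omega$ (Navier conditions), one has $|Lh|\le\|\nabla Lh\|_{L^\infty}\,\rho\le C\phi_1$, so the inclusion of $\{w\in C^{2,\theta}(\overline\Omega):w|_{\partial\Omega}=0\}$ into ${\mathcal C}_{\phi_1}(\Omega)$ is continuous; combined with the compact embedding $C^{2,\theta}(\overline\Omega)\hookrightarrow C^2(\overline\Omega)$, this makes $L$ a compact linear operator on ${\mathcal C}_{\phi_1}(\Omega)$. Consequently $I+\alpha L$ is Fredholm of index zero, and it suffices to prove injectivity.

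For injectivity, suppose $h+\alpha Lh=0$, i.e. $h$ solves $\Delta^2 h=-\alpha K u_0^{-\alpha-1}h$ with $h=\Delta h=0$ on $\partial\Omega$ in the sense of Definition \ref{first-defi}. By the regularity just invoked, $h\in C^{2,\theta}(\overline\Omega)\cap C_0(\overline\Omega)$, hence $h$ is an admissible test function, and $K u_0^{-\alpha-1}h^2\le C\rho^{1-(\alpha+\beta)}\in L^1(\Omega)$ because $\alpha+\beta<2$. Testing the equation against $h$ yields
\[
\int_\Omega(\Delta h)^2\,{\rm d}x=-\alpha\int_\Omega K(x)\,u_0^{-\alpha-1}h^2\,{\rm d}x\le 0,
\]
so $\Delta h\equiv 0$ and, since $h\in H^1_0(\Omega)$, we conclude $h\equiv 0$. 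Thus $\partial_u F(0,u_0)$ is an isomorphism.

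The analytic implicit function theorem (see \cite{BuTo}) now provides $\lambda_0>0$ and an analytic map $(-\lambda_0,\lambda_0)\ni\lambda\mapsto u_\lambda\in{\mathcal C}_{\phi_1}(\Omega)$ with $F(\lambda,u_\lambda)=0$ and $u_\lambda|_{\lambda=0}=u_0$; in particular $\|u_\lambda-u_0\|_{{\mathcal C}_{\phi_1}(\Omega)}\to0$ as $\lambda\to0$. Shrinking $\lambda_0$ if necessary, continuity keeps $u_\lambda$ in the open cone ${\mathcal C}^+_{\phi_1}(\Omega)$; and since the set of invertible operators is open in $\mathcal{L}({\mathcal C}_{\phi_1}(\Omega))$ and $\lambda\mapsto\partial_u F(\lambda,u_\lambda)$ is continuous, $\partial_u F(\lambda,u_\lambda)$ stays invertible, i.e. $u_\lambda$ is non-degenerate. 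Finally, unwinding $F(\lambda,u_\lambda)=0$ gives $u_\lambda=(\Delta^2)^{-1}\bigl(K u_\lambda^{-\alpha}+\lambda f(u_\lambda)\bigr)$, and Lemmas \ref{jee} and \ref{hum} together with interior elliptic bootstrap upgrade $u_\lambda$ to a genuine $C^4(\Omega)\cap C^2(\overline\Omega)$ solution of $(P_\lambda)$. The main obstacle is the analysis of $\partial_u F(0,u_0)$: proving compactness of the singular linear operator $L$ on the weighted space ${\mathcal C}_{\phi_1}(\Omega)$ (which relies on the sharp restriction $\alpha+\beta<2$ through Lemma \ref{jee}) and justifying the energy identity that forces injectivity.
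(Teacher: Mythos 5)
Your proof is correct and follows essentially the same route as the paper's: compute $\partial_u F$ at $(0,u_0)$, show it is a compact perturbation of the identity using Lemma \ref{jee} (hence Fredholm of index zero), establish injectivity via the energy identity $\int_\Omega(\Delta h)^2 = -\alpha\int_\Omega K u_0^{-\alpha-1}h^2\le 0$, apply the analytic implicit function theorem, and shrink $\lambda_0$ to preserve non-degeneracy by continuity. You supply a few details the paper leaves implicit (openness of the cone ${\mathcal C}^+_{\phi_1}$, the $L^1$ justification for the energy identity, and the mechanism $|Lh|\le\|\nabla Lh\|_{L^\infty}\rho$ giving compactness of $L$ on ${\mathcal C}_{\phi_1}(\Omega)$), but these are elaborations, not a different method.
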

\proof
We  would like to apply the analytic version of the implicit function theorem. Given $u \in {\mathcal C}^+_{\phi_1 }$, we can check that the linearised operator $\partial_u F(\lambda,u): {\mathcal C}_{\phi_1 }(
\Omega ) \to {\mathcal C}_{\phi_1 }(
\Omega )$ is given by  
\begin{equation}\label{kab}
\partial_u F(\lambda,u )\phi=\phi+  (\Delta^2)^{-1}\Big( [\alpha K(x)u^{-\alpha-1} -\lambda f^{\prime}(u)]\phi \Big).
\end{equation}

 Note that $K(x)u^{-\alpha-1}\phi \in {\mathcal C}_{\phi_1 ^{-\alpha-\beta }}(\Omega )$. Indeed, for some $C>0$,
 $$
\| K(x)u^{-\alpha-1}\phi\|_{{\mathcal C}_{\phi_1 ^{-\alpha-\beta }}(\Omega )} \leq C \|\phi\|_{{\mathcal C}_{\phi_1 }(\Omega )}.
 $$
 Therefore from lemma \ref{jee}, we obtain a constant $\theta \in (0,1)$ (depending only on $\nu,\alpha$ and $\beta$) such that 
 $$ \Big\{ K(x)u^{-\alpha-1}\phi \Big\} \;\text{  bounded in }\;   {\mathcal C}_{\phi_1 ^{-\alpha-\beta }}(\Omega )  \Longrightarrow \Big\{(\Delta^2)^{-1}( K(x)u^{-\alpha-1}\phi)\Big\} \;\text{  bounded in }\;   C^{2,\theta}(\overline{\Omega}).$$
 We infer then that $\partial_u F(\lambda,u)$ is a compact perturbation of the identity and hence is a Fredholm operator of $0$-index. 
Next, we show that $\partial_u F(\lambda,u)$ is invertible for $\lambda \leq 0$.
If $\phi$ belongs to the kernel of $\partial_uF(\lambda,u)$, denoted by $N(\partial_uF(\lambda,u))$, we will have
\begin{eqnarray*}
\int_{\Omega}(\Delta\phi)^2{\rm d}x+ \alpha \int_{\Omega}K(x) u^{-\alpha-1} \phi^2 {\rm d}x - \lambda \int_{\Omega} f^{\prime}(u)\phi^2 {\rm d}x=0.
\end{eqnarray*}
Using $(f_2)$ and non positivity of $\lambda$ we  get $\phi\equiv\, 0$ from the above identity.  
Therefore, if $\lambda \leq 0$, we have $N(\partial_u F(\lambda,u))=\{0\}$ which implies that $\partial_u F(\lambda,u)$ is invertible. 

 Appealing to the real analytic version of  implicit function theorem (see \cite{BuTo}), we obtain a $\lambda_0>0$ and an analytic branch of solutions, $\lambda\to (\lambda ,u_\lambda)$ to $F(\lambda, u)=0$ for $\lambda\in (-\lambda_0,\lambda_0)$.  By taking $\lambda_0$ smaller if required, from the smoothness of the map $F$ we obtain that  $\partial_u F(\lambda,u_\lambda)$ is invertible for all $-\lambda_0<\lambda < \lambda_0$. That is, the solution $u_\lambda$ is non degenerate for all such $\lambda$. 
\qed\hfill
\begin{proposition}\label{nar}
There exists $\Lambda>0$ such that $(P_\lambda)$ admits no solution for $\lambda>\Lambda$.
\end{proposition}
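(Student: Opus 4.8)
The plan is to derive, by testing $(P_\lambda)$ against the first eigenfunction $\phi_1$, an a priori upper bound on $\lambda$ valid for every solution, and then to take $\Lambda$ to be that bound. Fix $\lambda>0$ (there is nothing to prove otherwise) and let $u$ solve $(P_\lambda)$. Since $-\Delta\phi_1=\lambda_1\phi_1$ and $\phi_1=0$ on $\partial\Omega$, we have $\Delta\phi_1=0$ on $\partial\Omega$ and $\Delta^2\phi_1=\lambda_1^2\phi_1$, so $\phi_1$ is admissible in Definition \ref{first-defi}; plugging $\psi=\phi_1$ into the weak formulation of $(P_\lambda)$ gives, with $m:=\int_\Omega u\phi_1\,{\rm d}x\in(0,\infty)$,
\[
\lambda_1^2\,m=\int_\Omega K u^{-\alpha}\phi_1\,{\rm d}x+\lambda\int_\Omega f(u)\phi_1\,{\rm d}x,
\]
both integrals on the right being finite (the first as in Remark \ref{remark1}, the second since $0\le f(u)\le f(\|u\|_\infty)$).

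First I would extract a universal lower bound on $m$. Dropping the nonnegative $\lambda$-term, using $K\ge k_0:=\inf_\Omega K>0$, and applying Jensen's inequality for the convex map $s\mapsto s^{-\alpha}$ with the probability measure $\phi_1\,{\rm d}x/\|\phi_1\|_{L^1}$, one obtains $\lambda_1^2 m\ge k_0\|\phi_1\|_{L^1}^{1+\alpha}m^{-\alpha}$, hence
\[
m\ \ge\ m_0:=\Big(\frac{k_0}{\lambda_1^2}\Big)^{1/(1+\alpha)}\|\phi_1\|_{L^1}>0,
\]
a bound independent of $\lambda$ and $u$. Next I would minorise $f$ by a suitable affine function. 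By $(f_1)$, $f$ is real-analytic on $(0,\infty)$; by $(f_0)$--$(f_2)$ it is nondecreasing with $f(0)=0$; and by $(f_2)$ it is not identically $0$. The identity theorem then forces $f>0$ on all of $(0,\infty)$, and together with $\liminf_{t\to\infty}f(t)/t=:\ell>0$ this yields, for each $\delta>0$, a constant $c_\delta>0$ (namely $c_\delta=\min\{\ell/2,\ f(\delta)/T_0\}$, where $T_0$ is chosen so that $f(t)\ge(\ell/2)t$ for $t\ge T_0$) with $f(t)\ge c_\delta t$ for $t\ge\delta$, whence $f(t)\ge c_\delta t-c_\delta\delta$ for all $t\ge 0$. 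Consequently $\int_\Omega f(u)\phi_1\,{\rm d}x\ge c_\delta\big(m-\delta\|\phi_1\|_{L^1}\big)$.

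To conclude I would fix $\delta$ so small that $\delta\|\phi_1\|_{L^1}<m_0$. Feeding the two lower bounds into the basic identity and using $m\ge m_0>\delta\|\phi_1\|_{L^1}$ together with the fact that $t\mapsto t/(t-\delta\|\phi_1\|_{L^1})$ decreases on $(\delta\|\phi_1\|_{L^1},\infty)$,
\[
\lambda_1^2\,m\ \ge\ \lambda\,c_\delta\big(m-\delta\|\phi_1\|_{L^1}\big)\qquad\Longrightarrow\qquad \lambda\ \le\ \frac{\lambda_1^2}{c_\delta}\cdot\frac{m_0}{m_0-\delta\|\phi_1\|_{L^1}}=:\Lambda .
\]
Hence $(P_\lambda)$ admits no solution when $\lambda>\Lambda$.

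The delicate step is the construction of the affine minorant with the ratio $c_\delta\delta/c_\delta=\delta$ as small as we please: a crude bound $f(t)\ge ct-b$ with $b$ not controlled would only give $\lambda\lesssim m/(m-b\|\phi_1\|_{L^1}/c)$ on the range $m>b\|\phi_1\|_{L^1}/c$, leaving open the possibility of a solution whose eigenfunction-average $m$ stays near the universal floor $m_0$ as $\lambda\to\infty$. It is precisely the strict positivity $f>0$ on $(0,\infty)$ --- a genuine consequence of the analyticity hypothesis $(f_1)$, not of $(f_0)$--$(f_2)$ alone --- that allows us to push that ratio below $m_0/\|\phi_1\|_{L^1}$; without it, e.g.\ if $f$ vanished on a neighbourhood of $0$ and the solution $u_0$ of $(P)$ were small, then $u_0$ would solve $(P_\lambda)$ for every $\lambda\ge 0$ and no finite $\Lambda$ could exist.
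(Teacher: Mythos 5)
Your argument is correct, and it is a more careful (indeed more rigorous) route than the one the paper actually writes down, although the two begin identically by testing $(P_\lambda)$ against $\phi_1$. The paper's proof asserts the pointwise bound $K(x)t^{-\alpha}+\lambda f(t)\geq c_1+c_2\lambda t$ for all $x\in\Omega$ and all $t>0$, with $\lambda$-independent constants $c_1,c_2>0$, and then integrates against $\phi_1$ to conclude $\lambda\leq\lambda_1^2/c_2$. But that pointwise inequality, read literally, is not available under $(f_0)$--$(f_2)$: for $f(t)=t^2$ (which satisfies all the hypotheses, including $(f_1)$ with $g(s)=s$, $p=2$) one has $f(t)/t\to 0$ as $t\to 0^+$, and an elementary computation shows $\inf_{t>0}\bigl\{k_0\,t^{-\alpha}+\lambda\,(t^2-c_2t)\bigr\}<0$ once $\lambda$ is large, for any fixed $c_2>0$. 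Your proof fills this gap precisely where it occurs by supplying two additional ingredients: the universal floor $m\geq m_0$ on the eigenfunction average, obtained by applying Jensen's inequality to the singular term (so $m$ cannot collapse as $\lambda\to\infty$), and the strict positivity $f>0$ on $(0,\infty)$, which --- as you correctly stress --- comes from the analyticity built into $(f_1)$ and \emph{not} from $(f_0)$--$(f_2)$ alone. These two facts are exactly what let you build an affine minorant $f(t)\geq c_\delta t-c_\delta\delta$ whose offset $\delta$ can be pushed below $m_0/\Vert\phi_1\Vert_{L^1}$, closing the argument. Your closing remark that $(f_1)$ is genuinely necessary (a nonnegative nondecreasing $f$ vanishing on $[0,a]$ would let the small solution $u_0$ of $(P)$ solve $(P_\lambda)$ for every $\lambda$) is also right, and pinpoints why the paper's appeal to $(f_2)$ alone in this proof is insufficient. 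So: same starting point and same flavour of conclusion, but your version makes explicit, and correctly uses, the hypothesis the one-line version in the paper silently relies on.
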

\proof 
Using the assumption on $K$ and $(f_2)$ we note that for some positive constants $c_1,c_2$
$$
K(x)t^{-\alpha}+\lambda f(t) \geq c_1+ c_2 \lambda  t  \;\; \text{ for all }\;\; x\in \Omega, t >0.
$$
 Let $\lambda>0$. We multiply the equation in $(P_\lambda)$ by $\phi_1$ and use the above inequality to get for any solution $u$ to $(P_\lambda)$:
\begin{eqnarray*}
\lambda_1^2\int_{\Omega}u\phi_1{\rm d}x=\int_{\Omega}K(x)\phi_1u^{-\alpha}{\rm d}x+\lambda\int_{\Omega}f(u)\phi_1{\rm d}x\geq  c_1 \int_{\Omega} \phi_1{\rm d}x + c_2 \lambda\int_{\Omega}u\phi_1{\rm d}x.
\end{eqnarray*}
 Hence, necessarily,  $\lambda\leq \frac{\lambda_1^2}{c_2}$.   
\hfill\qed

We finally prove Theorem \ref{th1}.
\proof
Let ${\mathcal U}\eqdef \R\times {\mathcal C}_{\phi_1 }^+(\Omega )$. We first check that $F$ satisfies the conditions $(G1)-(G3)$ in order to apply Theorem \ref{theo9.1.1}. From the regularity estimate  in lemma \ref{jee},  we deduce that any bounded subset of ${\mathcal S}$ is relatively compact in $\R\times {\mathcal C}_{\phi_1}$, i.e. $(G1)$ holds. $(G2)-(G3)$ follow from the above proposition. Hence theorem \ref{theo9.1.1} asserts the existence of ${\mathcal A} \subset {\mathcal S}$ satisfying $(a)-(e)$. 

(i) follows from proposition \ref{A+}.\\
(ii) We first prove that assertion  in the alternative $(e)(i)$ of theorem \ref{theo9.1.1} occurs. We do this by ruling out the possibilities $(e)(ii)$ and $(e)(iii)$. 

The case $(e)(ii)$ can be ruled out as follows. Suppose there exists a sequence $\{(\lambda(s_n),u(s_n))\} \subset {\mathcal A}$ such that $(\lambda(s_n),u(s_n)) \to (\tilde\lambda, \tilde u) \in \partial {\mathcal U}$ in $\R\times {\mathcal C}_{\phi_1 }(\Omega )$ as $s_n\to\infty$. In particular, $\tilde u\not\in {\mathcal C}_{\phi_1}^+(\Omega)$. 
Applying corollary \ref{dee}, we get  for some $C>0$ independent of $n$,
\begin{eqnarray}\label{hopf-n}
u(s_n)(x)\geq C\rho(x)\int_{\Omega}(K(y) u(s_n)^{-\alpha} +\lambda(s_n)f(u(s_n)) \rho(y) {\rm d}y \;\; \forall x \in \Omega.
\end{eqnarray}
Passing to the limit as $n\to\infty$ and using Fatou's Lemma, we get
\begin{eqnarray}\label{hopf-limit}
\tilde u(x)\geq C\rho(x)\int_{\Omega}(K(y) \tilde u^{-\alpha}+\tilde\lambda f(\tilde u ))\rho(y) {\rm d}y
\end{eqnarray}
which contradicts the assumption $\tilde u\not\in {\mathcal C}_{\phi_1}^+(\Omega)$ if $\tilde \lambda \geq 0$.

 Next we rule out alternative $(e)(iii)$. For that, we observe that $u_0$ is the unique solution to $(P_0)$ and from the implicit function theorem, ${\mathcal A}_0$ is the unique branch of solutions emanating from $(0,u_0)$. Therefore, ${\mathcal A}$ can not bend back to join the point $(0,u_0)$. 

Hence alternative $(e)(i)$ of theorem \ref{theo9.1.1} holds. From proposition \ref{nar}, the conclusion (ii) of theorem follows.

(iii) follows in view of (ii) and the fact that there is no solution for  all large $\lambda$ (prop. \ref{nar}).

(iv) and (v) of theorem \ref{th1}  follow directly from $(c)$ and $(d)$ of theorem \ref{theo9.1.1}.

(vi) We also note that (see Proposition \ref{A+}) since $\partial_u F(\lambda,u_\lambda)$ is an invertible operator for  $\lambda<0$, the negative portion of ${\mathcal A}$ i.e.,  ${\mathcal A} \cap (-\infty,0) \times {\mathcal C}_{\phi_1}(\Omega )$, is a single analytic curve (indeed a graph from the $\lambda$ axis) consisting of non-degenerate solutions $u_\lambda$. In particular,  this curve does not undergo any bifurcations.

 This completes the proof of the theorem.
\qed\hfill

\noindent {\bf Acknowledgements:} the authors were funded by IFCAM (Indo-French Centre for Applied Mathematics) UMI CNRS under the project "Singular phenomena in reaction diffusion equations and in conservation laws".
\end{document}